\newtheorem{theorem}{Theorem}[section]
\newtheorem*{main theorem}{Main Theorem}
\newtheorem{lemma}{Lemma}[section]
\newtheorem{proposition}{Proposition}[section]
\theoremstyle{definition}
\theoremstyle{remark}
\numberwithin{equation}{section}
\def\us#1_#2{\underset{#2}{#1}}
\def\os#1^#2{\overset{#2}{#1}}
\DeclareFontFamily{U}{wncy}{}
\DeclareFontShape{U}{wncy}{m}{n}{<->wncyr10}{}
\DeclareSymbolFont{mcy}{U}{wncy}{m}{n}
\DeclareMathSymbol{\Sh}{\mathord}{mcy}{`X}
\DeclareMathSymbol{\sh}{\mathord}{mcy}{`x}
\renewcommand{\boldsymbol}[1]{\text{\bf \textit{#1}}}
\newcommand{\mysh}{\widetilde{\sh}}
\newcommand{\OT}[1]{\{1,3\}^{#1}}
\newcommand{\TO}[1]{\{3,1\}^{#1}}
\newcommand{\HS}{\widehat{\mathcal{S}}}
\newcommand{\HU}{\widehat{\mathcal{U}}}
\subjclass[2020]{11M32}
\keywords{multiple zeta value, $t$-adic symmetric multiple zeta value.}
\title[$t$-adic SMZVs for indices with alternating $1$ and $3$, starting with $1$ and ending with $3$]
{$t$-adic symmetric multiple zeta values for indices with alternating $1$ and $3$, starting with $1$ and ending with $3$}
\author{Kento Fujita}
\address[Kento Fujita]
{ARISE analytics, inc.,
2-21-1 Shibuya, Shibuya-ku, Tokyo 150-0002, Japan}
\email{kento.fujita@al.rikkyo.ac.jp}
\begin{document}
\maketitle
\begin{abstract}
Hirose, Murahara, and Saito proved that 
some $t$-adic symmetric multiple zeta values, for indices in which $1$ and $3$ appear alternately in succession,
can be expressed as polynomials in Riemann zeta values, and conjectured similar formulas.
In this paper, we prove the conjectured formula for indices that start with $1$ and end with $3$,
showing that they also can be expressed as polynomials in Riemann zeta values.
\end{abstract}
\section{Introduction}
An index is a sequence of positive integers, including the empty sequence $\emptyset$.
We call $\boldsymbol{k}$ an admissible index if its last component is greater than $1$ or if $\boldsymbol{k}=\emptyset$.
For an admissible index $\boldsymbol{k}=(k_1,\ldots,k_r)$, the multiple zeta value (MZV) is defined by
\begin{equation}
    \zeta(\boldsymbol{k})
    =
    \zeta(k_1,\ldots,k_r)
    :=
    \sum_{0<n_1<\cdots<n_r}
    \frac{1}{n_1^{k_1}\cdots n_r^{k_r}}
    \in\mathbb{R},
\end{equation}
where $\zeta(\emptyset)$ is defined to be $1$.
Let $\mathcal{Z}$ be the $\mathbb{Q}$-algebra generated by all MZVs.
For an index $\boldsymbol{k}$, whether admissible or not, $\zeta^*(\boldsymbol{k})$ is defined as the constant term of 
the $*$-regularized MZV (see \cite{IKZ06} for details), which is a real number in $\mathcal{Z}$.
Note that $\zeta^*(1)=0$ and that $\zeta^*(\boldsymbol{k})=\zeta(\boldsymbol{k})$ if $\boldsymbol{k}$ is admissible.

For a nonnegative integer $m$ and an index $\boldsymbol{k}=(k_1,\ldots,k_r)$, set
\begin{equation}
    \zeta^*_m(\boldsymbol{k})
    =
    \zeta^*_m(k_1,\ldots,k_r)
    :=
    \sum_{\substack{
        l_1,\ldots,l_r\geq 0
        \\
        l_1+\cdots+l_r=m
    }}
    \zeta^*(k_1+l_1,\ldots,k_r+l_r)
    \prod_{i=1}^{r}
    \binom{k_i+l_i-1}{l_i}
    \in\mathbb{R}.
\end{equation}
For the empty index, $\zeta^*_m(\emptyset)$ is understood as $\delta_{m,0}$, 
where $\delta_{m,0}$ denotes the Kronecker delta.
The $t$-adic symmetric multiple zeta value ($t$-adic SMZV) is defined 
as an element in $\mathcal{Z}\llbracket t \rrbracket$ by
\begin{equation}
    \zeta^*_{\HS}(\boldsymbol{k})
    =
    \zeta^*_{\HS}(k_1,\ldots,k_r)
    :=
    \sum_{i=0}^{r}
    (-1)^{k_r+\cdots+k_{i+1}}
    \zeta^*(k_1,\ldots,k_i)
    \sum_{m=0}^\infty
    \zeta_m^*(k_r,\ldots,k_{i+1})t^m
    \in\mathcal{Z}\llbracket t \rrbracket.
\end{equation}
Here, $\zeta^*_{\HS}(\emptyset)$ is understood as $1$.
The $t$-adic SMZVs have been studied in \cite{Jar24,Ros19,OSY21,HMO21} as a generalization of 
the ordinary SMZVs (the $t$-adic SMZVs for $t=0$ or the $t$-adic SMZVs modulo $t$; see \cite{KZ}).
For every positive integer $m$, let $\pi_m:\mathcal{Z}\llbracket t \rrbracket\rightarrow \mathcal{Z}\llbracket t \rrbracket/t^m$ be the natural projection.
Then, $\mathcal{S}_m$-symmetric multiple zeta values ($\mathcal{S}_m$-SMZVs) are defined as follows:
\begin{equation}
    \zeta^*_{\mathcal{S}_m}(\boldsymbol{k}) 
    :=
    \pi_m(\zeta^*_{\HS}(\boldsymbol{k}))
    \in \mathcal{Z}\llbracket t \rrbracket/t^m.
\end{equation}
Hereafter, we may consider operations on $\mathcal{S}_m$-SMZVs modulo $\pi^2$.
Such elements should be understood as belonging to 
$(\mathcal{Z}/\pi^2\mathcal{Z})\llbracket t \rrbracket / t^m$.
Furthermore, throughout this paper, the empty sum is understood as zero.

It is known that MZVs can be expressed as polynomials in terms of Riemann zeta values (see \cite{BBBL01, BC20, BY18, BB03}).  
Based on this, Ono, Sakurada, and Seki investigated a similar structure for SMZVs and, in \cite[Theorem 4.1]{OSS21}, determined the values of $\mathcal{S}_2$-SMZVs modulo $\pi^2$ for a broader range of indices.  
Further extending this line of research, \cite{HMS24} examines the case where the $t$-adic SMZV for certain special indices, namely  
\begin{equation}
    \boldsymbol{k} = 
    (\{1,3\}^n), 
    (\{3,1\}^n), 
    (\{3,1\}^n,3), 
    (\{1,3\}^n,1),
\end{equation}
can be expressed as polynomials in Riemann zeta values. Here, $n$ is a nonnegative integer, and $\{a,b\}^n$ denotes the sequence obtained by repeating $a$ and $b$ alternately $n$ times, 
so that  
$(\{a,b\}^2) = (a, b, a, b)$.
As an example, the following result is established in \cite{HMS24}:  
\begin{theorem}[{\cite[Theorem 1.3 and Corollary 1.4]{HMS24}}]
    For a nonnegative integer $n$, we have
    \begin{multline}
        \zeta^*_{\mathcal{S}_3}(\{3,1\}^n)
        =
        \frac{2(-4)^n}{(4n+2)!}\pi^{4n}
        +
        (-1)^{n+1} 
        \sum_{\substack{
            n_0,n_1\geq 0
            \\
            n_0+n_1=2n
        }}
        \frac{(-1)^{n_0}2^{n_0-n_1+2}}{(2n_0+2)!    }
        \pi^{2n_0}\zeta^*(2n_1+1) t 
        \\
        +
        (-1)^n 
        \sum_{\substack{
            n_0,n_1,n_2\geq 0
            \\
            n_0+n_1+n_2=2n
        }}
        \frac{(-1)^{n_0}2^{n_0-n_1-n_2+2}}{(2n_0+2)!}\pi^{2n_0}
        \zeta^*(2n_1+1)\zeta^*(2n_2+1)t^2
    \end{multline}
    and by modulo $\pi^2$ reduction, we have
    \begin{multline}
        \label{eq: zetaS3(3,1) mod pi^2}
        \zeta^*_{\mathcal{S}_3}(\{3,1\}^n)
        \\
        \equiv 
        \delta_{n, 0}
        -
        2(-4)^{-n}\zeta^*(4n+1)t
        +
        2(-4)^{-n}
        \sum_{\substack{
            n_1,n_2\geq 0
            \\
            n_1+n_2=2n
        }}
        \zeta^*(2n_1+1)\zeta^*(2n_2+1)t^2
    \pmod{\pi^2}.
    \end{multline}
\end{theorem}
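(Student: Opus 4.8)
The plan is to package the whole family into the generating function
\[
F(x)\;:=\;\sum_{n\ge 0}\zeta^*_{\HS}(\{3,1\}^n)\,x^{4n}\in\mathcal Z\llbracket t\rrbracket\llbracket x\rrbracket,
\]
to determine $F(x)$ modulo $t^3$ in closed form, and to read off the three displayed expressions as the coefficients of $t^0,t^1,t^2$. First I would unfold the definition of $\zeta^*_{\HS}$ at $(\{3,1\}^n)$. A cut at position $i$ splits the index into a forward prefix $(k_1,\dots,k_i)$ and a reversed suffix $(k_r,\dots,k_{i+1})$; since the pattern is rigidly alternating, an even cut $i=2j$ yields the prefix $(\{3,1\}^j)$ and reversed suffix $(\{1,3\}^{n-j})$, while an odd cut $i=2j+1$ yields $(\{3,1\}^j,3)$ and the complementary odd-length suffix $(\{1,3\}^{n-j-1},1)$. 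The sign $(-1)^{k_r+\cdots+k_{i+1}}$ equals $(-1)^{\wt}$ of the suffix, hence $+1$ on even cuts and $-1$ on odd cuts.

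Next I would recognise the inner factor $\sum_{m\ge0}\zeta^*_m(\boldsymbol{l})\,t^m$ as a star-regularised $t$-shifted multiple zeta value: for admissible $\boldsymbol{l}$ it equals $\sum_{0<n_1<\cdots<n_r}\prod_j(n_j-t)^{-l_j}$, using $\sum_{p\ge0}\binom{l+p-1}{p}(t/n)^p=(1-t/n)^{-l}$. Expanding $(n-t)^{-l}$ and truncating modulo $t^3$ writes each suffix factor as $\zeta^*(\text{suffix})$ plus a $t$-term and a $t^2$-term, each a finite sum of $\zeta^*$ with one or two suffix-components raised by one. Because weight is additive across the cut, the prefix data and the signed, $t$-shifted suffix data decouple, so $F(x)$ equals the part of a product $P(x)\,S(x;t)$ supported on exponents of $x$ divisible by $4$, where $P(x)=\sum_j\zeta^*(\{3,1\}^j)x^{4j}+\sum_j\zeta^*(\{3,1\}^j,3)x^{4j+3}$ collects the prefixes and $S(x;t)$ collects the reversed suffixes with their signs.

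Evaluating these two series is the core. For the weight-$4k$ admissible pieces I would invoke the Bowman--Bradley evaluation $\zeta(\{1,3\}^k)=2\pi^{4k}/(4k+2)!$ of \cite{BBBL01,BB03}, whose generating series is $(\cosh\pi x-\cos\pi x)/(\pi x)^2$. The decisive point I would establish is that, after multiplying the two series and projecting onto exponents divisible by $4$, the $t^0$-part collapses to the same Bowman--Bradley series with $\pi$ replaced by $(1+i)\pi$; since $((1+i)\pi)^4=-4\pi^4$, extracting the coefficient of $x^{4n}$ gives exactly $2(-4)^n\pi^{4n}/(4n+2)!$, the claimed $t^0$-coefficient. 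The $t^1$- and $t^2$-parts come from the once- and twice-shifted suffix terms, which after the signed convolution against $P(x)$ are responsible for a single factor $\zeta^*(2m+1)$ and a symmetric product of two such factors respectively, reproducing the convolutions over $n_0+n_1=2n$ and $n_0+n_1+n_2=2n$ together with the constants $2^{\,n_0-n_1+2}/(2n_0+2)!$ and the signs $(-1)^{n+1}$ and $(-1)^n$. The $\mathcal S_2$ values modulo $\pi^2$ of \cite{OSS21} give an independent check of the $t^0$- and $t^1$-coefficients.

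The main obstacle is that the non-pure-$\pi$ ingredients are not individually tractable: the regularised values $\zeta^*(\{3,1\}^j)$, the odd-weight values $\zeta(\{3,1\}^j,3)$, and the once-shifted suffix sums are not powers of $\pi$, and only their signed convolution, after the projection onto exponents divisible by $4$, collapses to the clean complexified Bowman--Bradley form and forces the odd parity of the surviving zeta factors. Carrying this through rigorously requires (i) a $t$-deformed Bowman--Bradley identity that tracks the shift parameter and (ii) careful control of the $*$-regularisation for the non-admissible prefixes and suffixes that begin or end in $1$. Once $P(x)$ and $S(x;t)$ are known modulo $t^3$, forming the product, projecting, and extracting coefficients is routine; the reduction \eqref{eq: zetaS3(3,1) mod pi^2} then follows by discarding every term of $\pi$-degree at least $2$, which kills all summands with $n_0\ge 1$ and leaves $\delta_{n,0}$ at $t^0$, $-2(-4)^{-n}\zeta^*(4n+1)$ at $t^1$, and $2(-4)^{-n}\sum_{n_1+n_2=2n}\zeta^*(2n_1+1)\zeta^*(2n_2+1)$ at $t^2$.
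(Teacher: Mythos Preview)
The paper does not supply its own proof of this theorem: it is quoted from \cite{HMS24} as a known result and is only \emph{used} later (the modulo-$\pi^2$ formula \eqref{eq: zetaS3(3,1) mod pi^2} enters the proof of the Main Theorem via \eqref{eq: zeta(I_2(3,1))}). There is therefore no in-paper argument to compare your proposal against.

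On its own merits, what you have written is a plan, not a proof. You yourself flag the two pieces on which the whole argument hinges --- a ``$t$-deformed Bowman--Bradley identity that tracks the shift parameter'' and ``careful control of the $*$-regularisation for the non-admissible prefixes and suffixes'' --- and you do not supply either. The generating-function architecture (splitting by parity of the cut into a prefix series $P(x)$ and a $t$-shifted suffix series $S(x;t)$, then projecting onto exponents divisible by $4$) is a reasonable scaffold, and your remark that the $t^0$ answer matches a Bowman--Bradley series under $\pi\mapsto(1+i)\pi$ is arithmetically consistent since $((1+i)\pi)^{4n}=(-4)^n\pi^{4n}$. But the claim that after the signed convolution and projection the product actually \emph{collapses} to that series, and likewise that the $t^1$- and $t^2$-parts reduce to the displayed convolutions with exactly the constants $(-1)^{n_0}2^{n_0-n_1+2}/(2n_0+2)!$, is asserted rather than shown. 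The individual ingredients $\zeta^*(\{3,1\}^j)$, $\zeta(\{3,1\}^j,3)$, and the once- and twice-shifted suffix values are not separately polynomials in $\pi$ and single zeta values; the cancellation you are invoking is precisely the content of the theorem and cannot be taken for granted. Until your items (i) and (ii) are actually established, this remains an outline rather than a proof.
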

In \cite{HMS24}, the authors suggested that
it is reasonable to think that 
$
\zeta^*_{\mathcal{S}_m}(\{3,1\}^n)
$ 
cannot be written as polynomials in Riemann zeta values when $m \geq 4$.
This is because, even if we reduce modulo $\pi^2$, the coefficient of $t^3$ in 
$\zeta^*_{\mathcal{S}_4}(3,1,3,1)$
is equal to 
$
\frac{605}{4}\zeta(11)
+\frac{19}{4}\zeta(3)^2\zeta(5)
+2\zeta(3)\zeta(3,5)
-
2\zeta(3,3,5)
$.
Thus, we also do not examine 
$\zeta^*_{\mathcal{S}_m}(\{3,1\}^n)$
for $m \geq 4$ in this paper.

Moreover, \cite{HMS24} examines the case of the index $(\{1,3\}^n)$.
It is known that $\zeta^*_{\mathcal{S}_m}(\{1,3\}^n)$ can be expressed as polynomials in Riemann zeta values when $m \leq 2$:
for example, the following result is established in \cite[Theorem 1.1]{HMS24}.
\begin{theorem}{\cite[Theorem 1.1]{HMS24}}
    For a nonnegative integer $n$, we have
    \begin{multline}
        \label{eq: zetaS2(1,3)}
        \zeta^*_{\mathcal{S}_2}(\{1,3\}^n) 
        = 
        \frac{2(-4)^n}{(4n+2)!} \pi^{4n} 
        + 
        \biggl(
            \sum_{\substack{n_0,n_1 \geq 0 \\ n_0+n_1=n}} 
            \frac{(-4)^{n_0+1} (2 - (-4)^{-n_1})}{(4n_0+2)!} 
            \pi^{4n_0} 
            \zeta^*(4n_1+1) 
            \\
            - 
            (-1)^n 
            \sum_{\substack{
                n_0,n_1 \geq 0 
                \\
                n_0+n_1=2n 
                \\
                n_0, n_1 \text{:odd}
            }} 
            \frac{2^{n_0-n_1+2}}{(2n_0+2)!} 
            \pi^{2n_0} 
            \zeta(2n_1+1)
        \biggr) 
        t.
    \end{multline}
\end{theorem}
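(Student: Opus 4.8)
The plan is to recover $\zeta^*_{\mathcal{S}_2}(\OT{n})$ as the coefficients of $t^0$ and $t^1$ in $\zeta^*_{\HS}(\OT{n})$ and to evaluate these two coefficients separately. Writing $\boldsymbol{k}=(k_1,\dots,k_{2n})$ with $k_{2a-1}=1$ and $k_{2a}=3$, I substitute this index into the defining expansion of $\zeta^*_{\HS}$ and split the sum over the cut point $i$ according to the parity of $i$: for $i=2j$ the prefix is $(\OT{j})$, the reversed tail is $(\TO{n-j})$, and the sign is $+1$, whereas for $i=2j+1$ the prefix is $(\OT{j},1)$, the reversed tail is $(\TO{n-j-1},3)$, and the sign is $-1$. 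Keeping only the terms with $m=0,1$ then expresses the two coefficients as the convolutions
\[
A_n=\sum_{j=0}^{n}\zeta^*(\OT{j})\,\zeta^*(\TO{n-j})-\sum_{j=0}^{n-1}\zeta^*(\OT{j},1)\,\zeta^*(\TO{n-j-1},3),
\]
\[
B_n=\sum_{j=0}^{n}\zeta^*(\OT{j})\,\zeta^*_1(\TO{n-j})-\sum_{j=0}^{n-1}\zeta^*(\OT{j},1)\,\zeta^*_1(\TO{n-j-1},3),
\]
where $\zeta^*_1(l_1,\dots,l_s)=\sum_{q=1}^{s}l_q\,\zeta^*(l_1,\dots,l_q+1,\dots,l_s)$.

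The coefficient $A_n$ is precisely the ordinary symmetric multiple zeta value $\zeta^*_{\mathcal{S}_1}(\OT{n})=\pi_1(\zeta^*_{\HS}(\OT{n}))$. By the reversal symmetry of symmetric MZVs, $\zeta^*_{\mathcal{S}_1}(\boldsymbol{k})=(-1)^{k_1+\cdots+k_r}\zeta^*_{\mathcal{S}_1}(k_r,\dots,k_1)$, and since $\OT{n}$ has even weight $4n$ this gives $A_n=\zeta^*_{\mathcal{S}_1}(\TO{n})$. The latter is the constant term of the quoted evaluation of $\zeta^*_{\mathcal{S}_3}(\TO{n})$, so $A_n=\tfrac{2(-4)^n}{(4n+2)!}\pi^{4n}$, which is exactly the $t^0$-part of the claim.

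The coefficient $B_n$ is the crux. Here the new ingredient is the shift-sum $\zeta^*_1$, which I would read off as the first $t$-derivative of the two-variable series obtained from the argument shift $n_i\mapsto n_i-t$, and the genuinely delicate feature is the $*$-regularization forced by the boundary entries: the prefixes $\zeta^*(\OT{j},1)$ end in $1$ (recall $\zeta^*(1)=0$) and the shifted tails $\zeta^*_1(\TO{\bullet})$ can again end in $1$. I would encode each of the four families appearing in $A_n,B_n$ as a generating series in one variable graded by weight (powers $x^{4j},x^{4j+1},x^{4j+3}$), turning each convolution into a product of two series; the closed forms needed as input are the Borwein--Bradley--Broadhurst--Lison\v{e}k evaluations, notably $\sum_{j\ge0}\zeta(\TO{j})x^{4j}=\bigl(\cosh\pi x-\cos\pi x\bigr)/(\pi x)^2$ together with $\zeta(\OT{j})=\zeta(\TO{j})$, and their trailing-entry and shifted companions. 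Extracting the coefficient of $x^{4n}$ from the resulting product of trigonometric/hyperbolic series should split $B_n$ into the two pieces of the stated formula: the part carrying $\zeta^*(4n_1+1)$, in which the regularized prefixes $\zeta^*(\OT{j},1)$ contribute the $(-4)^{-n_1}$ summand, and the ``odd--odd'' part carrying $\zeta(2n_1+1)$ that comes from the partial sums of odd index.

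The main obstacle is thus the computation of $B_n$, and within it the bookkeeping of the regularized boundary values and the shift-sums $\zeta^*_1$; this regularization is exactly what breaks the symmetry between the $\OT{n}$ answer sought here and the $\TO{n}$ answer of the quoted theorem. A structural shortcut that I would try to establish, and which also serves as a check, is the reversal relation modulo $t^2$
\[
\zeta^*_{\mathcal{S}_2}(\OT{n})=\zeta^*_{\mathcal{S}_2}(\TO{n})\big|_{t\mapsto -t}+\Bigl(\sum_{\substack{n_0,n_1\ge0\\ n_0+n_1=n}}\tfrac{2(-4)^{n_0+1}}{(4n_0+2)!}\pi^{4n_0}\zeta^*(4n_1+1)\Bigr)t ,
\]
whose correction term is precisely $-4$ times the convolution of the constant-term (symmetric-MZV) sequence with $\{\zeta^*(4n_1+1)\}$. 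Granting this, one imports $\zeta^*_{\mathcal{S}_2}(\TO{n})$ by reducing the quoted formula for $\zeta^*_{\mathcal{S}_3}(\TO{n})$ modulo $t^2$, substitutes $t\mapsto -t$, and adds the correction; the ``$2$''-summand of the first part of the claim comes from the correction, while the $(-4)^{-n_1}$-summand and the odd--odd part come from the reversed $\TO{n}$ value, reproducing formula \eqref{eq: zetaS2(1,3)} exactly.
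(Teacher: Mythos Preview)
The paper does not prove this statement at all: it is quoted verbatim from \cite[Theorem~1.1]{HMS24} and used as input, so there is no in-paper argument to compare against.

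On the merits of your sketch: your ``structural shortcut'' is in fact correct and is exactly the $n=1$ case of the identity behind Lemma~\ref{lem: sum iIn-i=sn(I0(a,b))} together with the observation (used in the proof of Lemma~\ref{lem: I2+1I1+2I=s2(I0(a,b))}) that ${}_1I_0(\{1,3\}^n)=I_1(\{3,1\}^n)$ for even weight. Concretely,
\[
\zeta^*(I_1(\{1,3\}^n))+\zeta^*(I_1(\{3,1\}^n))=\zeta^*\bigl(\sigma_1(I_0(\{1,3\}^n))\bigr)=(-1)^n\,4\sum_{i=0}^{n-1}(-1)^i\zeta(4i+5)\,\zeta(\{4\}^{n-1-i}),
\]
and inserting $\zeta(\{4\}^m)=\tfrac{2\cdot4^m}{(4m+2)!}\pi^{4m}$ gives precisely your correction term $\sum_{n_0+n_1=n}\tfrac{2(-4)^{n_0+1}}{(4n_0+2)!}\pi^{4n_0}\zeta^*(4n_1+1)$. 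So the relation you ``would try to establish'' is a one-line consequence of facts already in this paper, and your derivation of \eqref{eq: zetaS2(1,3)} from it plus the quoted $\{3,1\}^n$ theorem is then a clean computation.

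There is, however, a logical caveat you should flag: your shortcut imports \cite[Theorem~1.3]{HMS24} (the $\{3,1\}^n$ evaluation) to deduce \cite[Theorem~1.1]{HMS24}. Both come from the same source, and if the proof of the former in \cite{HMS24} relies on the latter, your argument is circular. Your first route---the direct generating-function computation of $B_n$ from the BBBL evaluations and their shifted/regularized companions---avoids this, but as written it is only a plan: the explicit closed forms for $\sum_{m}\zeta^*_1(\{3,1\}^m)x^{4m}$ and $\sum_{m}\zeta^*_1(\{3,1\}^m,3)x^{4m+3}$ are not supplied, and the handling of the $*$-regularized boundary terms (which you correctly identify as the delicate point) is left to ``should split''. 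That outline is plausible and presumably close to what \cite{HMS24} actually does, but it is not yet a proof.
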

According to \cite{HMS24}, since 
the coefficient of $t^2$ in $\zeta^*_{\mathcal{S}_3}(1,3,1,3)$
is given by 
$
\frac{1}{2}\zeta(2)\zeta(3)\zeta(5)
+\zeta(2)\zeta(3,5)
-
\frac{1}{2}\zeta(3)^2\zeta(4)
-
\frac{1}{4}\zeta(3)\zeta(7)
+
\frac{81}{8}\zeta(5)^2
-
\frac{103}{10}\zeta(10)
$,
it is reasonable to believe that 
$\zeta^*_{\mathcal{S}_3}(1,3,1,3)$
cannot be written as polynomials in Riemann zeta values. 
Nonetheless,
it is conjectured that when reduced modulo $\pi^2$,
$\zeta^*_{\mathcal{S}_3}(\{1,3\}^n)$ can always 
be written as a polynomial in Riemann zeta values. 
This conjecture has not yet been proven.
Therefore, the goal of this paper is to give a detailed proof of the conjectured polynomial expression modulo $\pi^2$ for 
$\zeta^*_{\mathcal{S}_3}(\{1,3\}^n)$
in terms of Riemann zeta values, as stated in \cite[Conjecture 1.2]{HMS24}.
\begin{main theorem}[{\cite[Conjecture 1.2]{HMS24}}]
    \label{Thm. main}
    For a nonnegative integer $n$, we have
    \begin{equation}
        \begin{split}
            \label{eq:main}
            \zeta^*_{\mathcal{S}_3}(\{1,3\}^n)
            &\equiv 
            \delta_{n, 0}
            +
            2((-4)^{-n}-4)
            \zeta^*(4n+1)t 
            \\
            &\qquad
            +
            \biggl(
                -2(-4)^{-n}
                \sum_{\substack{
                    n_1,n_2\geq 0
                    \\
                    n_1+n_2=n-1
                }}
                \zeta(4n_1+3)
                \zeta(4n_2+3)
                \\
                &\qquad\qquad
                +2
                \sum_{\substack{
                    n_1,n_2\geq 0
                    \\
                    n_1+n_2=n
                }}
                ((-4)^{-n_1}-2)
                ((-4)^{-n_2}-2)
                \zeta^*(4n_1+1)
                \zeta^*(4n_2+1)
            \biggr)
            t^2 
    \pmod{\pi^2}.
        \end{split}
    \end{equation}
\end{main theorem}
We remark that since the coefficient of $t^3$ in
$\zeta^*_{\mathcal{S}_4}(1,3,1,3)$ 
is equal to 
$
-\frac{845}{4}\zeta(11)
-\frac{9}{4}\zeta(3)^2\zeta(5)
-\zeta(3)\zeta(3,5)
+2\zeta(3,3,5)
$
modulo $\pi^2$,
it makes sense to think that 
$\zeta^*_{\mathcal{S}_4}(1,3,1,3)$
cannot be written as polynomials in Riemann zeta values,
and hence we do not investigate 
$\zeta^*_{\mathcal{S}_m}(\{1,3\}^n)$ for $m \geq 4$ in this article.

We also believe that our main theorem, along with theorems in \cite{HMS24} for the $t$-adic SMZVs, 
should hold for the $\boldsymbol{p}$-adic finite MZVs as well. 
This idea comes from the generalized Kaneko--Zagier conjecture, 
which suggests that $t$-adic SMZVs and $\boldsymbol{p}$-adic finite MZVs satisfy relations
of the same form.
(for details, see \cite{KZ,OSY21}). 
Note that these relations for $\boldsymbol{p}$-adic finite MZVs have not been proved yet.

The structure of this paper is as follows.
In Section 2, we review the necessary facts and state them as lemmas. 
In Section 3, we prove the main theorem.
\section{Preparation for proof}
Let $\mathcal{I}$ be a $\mathbb{Q}$-vector space generated by all indices.
For all indices $\boldsymbol{k}, \boldsymbol{l}$ 
and for all positive integers $k,l$, a $\mathbb{Q}$-bilinear product $*$ on $\mathcal{I}$ is defined inductively by setting
\begin{align}
    \emptyset*\boldsymbol{k}&=\boldsymbol{k}*\emptyset=\boldsymbol{k},
    \\
    (\boldsymbol{k},k)*(\boldsymbol{l},l)
    &=
    ((\boldsymbol{k})*(\boldsymbol{l},l),k)
    +
    ((\boldsymbol{k},k)*(\boldsymbol{l}),l)
    +
    (\boldsymbol{k}*\boldsymbol{l},k+l).
\end{align}
Note that 
$\zeta(\boldsymbol{k}*\boldsymbol{l})=\zeta(\boldsymbol{k})\zeta(\boldsymbol{l})$
holds for any admissible indices $\boldsymbol{k},\boldsymbol{l}$.
For an index $\boldsymbol{k}=(k_1,\ldots,k_r)$ and nonnegative integers $m$ and $n$, 
we define a $\mathbb{Q}$-linear map ${}_mI_n:\mathcal{I}\rightarrow \mathcal{I}$ by 
\begin{equation}
    {}_mI_n(\boldsymbol{k})
    =
    {}_mI_n(k_1,\ldots,k_r)
    :=
    \sum_{i=0}^r
    (-1)^{k_r+\cdots+k_{i+1}}
    \sigma_m(k_1,\ldots,k_i)
    *
    \sigma_n(k_r,\ldots,k_{i+1}),
\end{equation} 
where
$\sigma_n:\mathcal{I}\rightarrow \mathcal{I}$ is a $\mathbb{Q}$-linear map defined by
\begin{equation}
    \sigma_n(\boldsymbol{k})
    =
    \sigma_n(k_1,\ldots,k_r)
    :=
    \sum_{\substack{
        l_1,\ldots,l_r\geq 0
        \\
        l_1+\cdots+l_r=n
    }}
    (
    k_1+l_1,\ldots,k_r+l_r
    )
    \prod_{i=1}^{r}
    \binom{k_i+l_i-1}{l_i}.
\end{equation}
For the empty index, we define $\sigma_n(\emptyset):=\delta_{n,0}$.
Hereafter, we often abbreviate ${}_0I_n$ as $I_n$.
Note that $\zeta^*(\sigma_n(\boldsymbol{k}))=\zeta^*_n(\boldsymbol{k})$ holds 
for every nonnegative integer and any index $\boldsymbol{k}$.
Using this notation and the definition of $\mathcal{S}_m$-SMZVs,
we see that $\zeta_{\mathcal{S}_3}^*(\boldsymbol{k})$ can be written as
\begin{equation}
    \label{eq: zetaS3=z(I0)+z(I1)t+z(I2)t^2}
    \zeta_{\mathcal{S}_3}^*(k_1,\ldots,k_r)
    =
    \zeta^*(I_0(k_1,\ldots,k_r))
    +\zeta^*(I_1(k_1,\ldots,k_r))t
    +\zeta^*(I_2(k_1,\ldots,k_r))t^2.
\end{equation}
\begin{lemma}[cf. {\cite[Lemma 2.23]{HMS24}}]
    \label{lem: sigma_n(k*l)=sum(sigma_i(k)*sigma_{n-i}(l))}
    Let $n$ be a nonnegative integer.
    For indices $\boldsymbol{k}$ and $\boldsymbol{l}$, we have
    \begin{equation}
        \sigma_n(\boldsymbol{k}*\boldsymbol{l})
        =
        \sum_{i=0}^n
        \sigma_i(\boldsymbol{k})
        *
        \sigma_{n-i}(\boldsymbol{l}).
    \end{equation}
\end{lemma}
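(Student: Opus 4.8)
The plan is to read the identity as a multiplicativity statement for a single generating map. Set $\Sigma(\boldsymbol{k}):=\sum_{n\geq 0}\sigma_n(\boldsymbol{k})x^n$ in $\mathcal{I}[[x]]$ and extend $*$ to be $\mathbb{Q}[[x]]$-bilinear. Then $\sum_{i=0}^n\sigma_i(\boldsymbol{k})*\sigma_{n-i}(\boldsymbol{l})$ is precisely the coefficient of $x^n$ in $\Sigma(\boldsymbol{k})*\Sigma(\boldsymbol{l})$, so the lemma is equivalent to the assertion that $\Sigma$ is an algebra homomorphism, $\Sigma(\boldsymbol{k}*\boldsymbol{l})=\Sigma(\boldsymbol{k})*\Sigma(\boldsymbol{l})$. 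I would prove the coefficient-wise form directly, by induction on the total depth $\dep(\boldsymbol{k})+\dep(\boldsymbol{l})$. When one argument is empty, say $\boldsymbol{l}=\emptyset$, the left-hand side is $\sigma_n(\boldsymbol{k})$, while on the right only $i=n$ contributes since $\sigma_{n-i}(\emptyset)=\delta_{n-i,0}$, giving $\sigma_n(\boldsymbol{k})*\emptyset=\sigma_n(\boldsymbol{k})$; this settles the base case.

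The mechanism of the inductive step is the one-letter splitting formula obtained by isolating the last summation variable in the definition of $\sigma_n$:
\[
\sigma_n(\boldsymbol{m},j)=\sum_{b=0}^n\binom{j+b-1}{b}\bigl(\sigma_{n-b}(\boldsymbol{m}),\,j+b\bigr),
\]
where $(\,\cdot\,,\,j+b)$ means appending the letter $j+b$ to every index appearing in $\sigma_{n-b}(\boldsymbol{m})$. Writing $\boldsymbol{k}=(\boldsymbol{k}',k)$ and $\boldsymbol{l}=(\boldsymbol{l}',l)$ and expanding $\boldsymbol{k}*\boldsymbol{l}$ by the recursion for $*$ into the three pieces $(\boldsymbol{k}'*\boldsymbol{l},k)$, $(\boldsymbol{k}*\boldsymbol{l}',l)$, and $(\boldsymbol{k}'*\boldsymbol{l}',k+l)$, I would apply $\sigma_n$ to each piece by this splitting formula and then invoke the induction hypothesis on the shorter products $\boldsymbol{k}'*\boldsymbol{l}$, $\boldsymbol{k}*\boldsymbol{l}'$, and $\boldsymbol{k}'*\boldsymbol{l}'$, all of which have strictly smaller total depth.

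On the other side I would expand $\sum_{i=0}^n\sigma_i(\boldsymbol{k})*\sigma_{n-i}(\boldsymbol{l})$ by applying the splitting formula to both $\sigma_i(\boldsymbol{k}',k)$ and $\sigma_{n-i}(\boldsymbol{l}',l)$, and then expanding the harmonic products of the resulting letter-terminated indices by one further use of the recursion for $*$. Two of the three resulting families of terms reassemble immediately: summing the weights $\binom{l+c-1}{c}$ over $c$ reconstructs $\sigma_{n-i}(\boldsymbol{l})$ via the splitting formula read backwards, so the terms ending in $k+a$ and in $l+c$ match their counterparts with no further input. The one genuinely delicate point — and the step I expect to be the main obstacle — is the merge contribution, in which the terminal letters $k+a$ and $l+c$ combine into $k+l+a+c$. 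Reconciling its coefficient against the $\binom{k+l+b-1}{b}$ produced by the splitting formula applied to the merged letter $k+l$ requires exactly the Vandermonde convolution
\[
\sum_{a+c=b}\binom{k+a-1}{a}\binom{l+c-1}{c}=\binom{k+l+b-1}{b}.
\]
With this identity in hand the three families of terms line up with the three terms of the harmonic recursion, and the induction closes.
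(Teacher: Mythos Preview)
Your proposal is correct and follows essentially the same route as the paper: induction on total depth, the one-letter splitting identity $\sigma_n(\boldsymbol{m},j)=\sum_{b}\binom{j+b-1}{b}(\sigma_{n-b}(\boldsymbol{m}),j+b)$, expansion of $*$ into its three recursive pieces, and the Vandermonde convolution $\sum_{a+c=b}\binom{k+a-1}{a}\binom{l+c-1}{c}=\binom{k+l+b-1}{b}$ to handle the merge term. The generating-function framing is a harmless repackaging; the paper simply works from the right-hand side toward $\sigma_n$ of the three pieces rather than expanding both sides separately, but the computations are the same.
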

\begin{proof}
    By induction on the sum of the lengths of the indices.
    If either $\boldsymbol{k}$ or $\boldsymbol{l}$ is empty, the statement is obviously true.
    Assume that
    \begin{align}
        \sigma_n(\boldsymbol{k}*(\boldsymbol{l},l))
        &=
        \sum_{i=0}^n
        \sigma_i(\boldsymbol{k})
        *
        \sigma_{n-i}(\boldsymbol{l},l),
        \label{eq: H.P. k*(l,l)}
        \\
        \sigma_n((\boldsymbol{k},k)*\boldsymbol{l})
        &=
        \sum_{i=0}^n
        \sigma_i(\boldsymbol{k},k)
        *
        \sigma_{n-i}(\boldsymbol{l}),
        \label{eq: H.P. (k,k)*(l)}
        \\
        \sigma_n(\boldsymbol{k}*\boldsymbol{l})
        &=
        \sum_{i=0}^n
        \sigma_i(\boldsymbol{k})
        *
        \sigma_{n-i}(\boldsymbol{l}),
        \label{eq: H.P. k*l}
    \end{align}
    all hold for indices $\boldsymbol{k}, \boldsymbol{l}$ and nonnegative integers $k,l$. 
    By the definition of $\sigma_n$, we see that 
    \begin{equation}
        \begin{split}
        \sigma_n(\boldsymbol{k},k)
        &=
        \sum_{\substack{
            a_1,\ldots,a_r,a \geq 0
            \\
            a_1+\cdots+a_r+a=n}
        }
        (k_1+a_1, \cdots ,k_r+a_r,k+a)
        \binom{k+a-1}{a}
        \prod_{i=1}^r
        \binom{k_i+a_i-1}{a_i}
        \\
        &=
        \sum_{a=0}^{n}
        \binom{k+a-1}{a}
        \bigl(
            \sigma_{n-a}(\boldsymbol{k}),k+a
        \bigr).
        \label{eq: another sigma_n(k,k)}
        \end{split}
    \end{equation}
    This representation will often be used in later calculations.
    Let us calculate the case of $n$.
    We have 
    \begin{equation}
        \begin{split}
            &\sum_{i=0}^n 
            \sigma_i(\boldsymbol{k},k)
            *
            \sigma_{n-i}(\boldsymbol{l},l)
            \\
            &=
            \sum_{i=0}^n
            \biggl(
                \sum_{a=0}^{i}
                \binom{k+a-1}{a}
                \bigl(\sigma_{i-a}(\boldsymbol{k}),k+a\bigr)
            \biggr)
            *
            \biggl(
                \sum_{b=0}^{n-i}
                \binom{l+b-1}{b}
                \bigl(\sigma_{n-i-b}(\boldsymbol{l}),l+b\bigr)
            \biggr)
            \\
            &=
            \sum_{i=0}^{n}
            \sum_{a=0}^{i}
            \sum_{b=0}^{n-i}
            \binom{k+a-1}{a}
            \binom{l+b-1}{b}
            \bigl(\sigma_{i-a}(\boldsymbol{k})*(\sigma_{n-i-b}(\boldsymbol{l}),l+b),k+a\bigr)
            \\
            &\qquad
            +
            \sum_{i=0}^{n}
            \sum_{a=0}^{i}
            \sum_{b=0}^{n-i}
            \binom{k+a-1}{a}
            \binom{l+b-1}{b}
            \bigl((\sigma_{i-a}(\boldsymbol{k}),k+a)*\sigma_{n-i-b}(\boldsymbol{l}),l+b\bigr)
            \\
            &\qquad
            +
            \sum_{i=0}^{n}
            \sum_{a=0}^{i}
            \sum_{b=0}^{n-i}
            \binom{k+a-1}{a}
            \binom{l+b-1}{b}
            \bigl(\sigma_{i-a}(\boldsymbol{k})*\sigma_{n-i-b}(\boldsymbol{l}),k+a+l+b\bigr).
        \end{split}
    \end{equation}
    The first term is given by 
    \begin{multline}
        \sum_{i=0}^{n}
        \sum_{a=0}^{i}
        \sum_{b=0}^{n-i}
        \binom{k+a-1}{a}
        \binom{l+b-1}{b}
        \biggl(
            \bigl(\sigma_{i-a}(\boldsymbol{k})*(\sigma_{n-i-b}(\boldsymbol{l}),l+b),k+a\bigr)
        \biggr)
        \\
        \begin{aligned}[c]
            &=
            \sum_{i=0}^{n}
            \sum_{a=0}^{i}
            \binom{k+a-1}{a}
            \bigl(
                \sigma_{i-a}(\boldsymbol{k})*\sigma_{n-i}(\boldsymbol{l},l),k+a
            \bigr)
            \\
            &=
            \sum_{a=0}^{n}
            \sum_{i=a}^{n}
            \binom{k+a-1}{a}
            \bigl(
                \sigma_{i-a}(\boldsymbol{k})*\sigma_{n-i}(\boldsymbol{l},l),k+a
            \bigr)
            \\
            &=
            \sum_{a=0}^{n}
            \binom{k+a-1}{a}
            \bigl(
                \sigma_{n-a}(\boldsymbol{k}*(\boldsymbol{l},l)),k+a
            \bigr)
            \\
            &=
            \sigma_n
            \bigl(
                \boldsymbol{k}*(\boldsymbol{l},l),k
            \bigr),
        \end{aligned}
    \end{multline}
    where we use \eqref{eq: another sigma_n(k,k)} in the first and last equality,
    and the induction hypothesis \eqref{eq: H.P. k*(l,l)} in the third equality. 
    Similarly, the second term is given by
    \begin{equation}
        \sum_{i=0}^{n}
        \sum_{a=0}^{i}
        \sum_{b=0}^{n-i}
        \binom{k+a-1}{a}
        \binom{l+b-1}{b}
        \bigl((\sigma_{i-a}(\boldsymbol{k}),k+a)*\sigma_{n-i-b}(\boldsymbol{l}),l+b\bigr)
        =
        \sigma_n 
        \bigl(
            (\boldsymbol{k},k)*\boldsymbol{l},l
        \bigr)
    \end{equation}
    by \eqref{eq: another sigma_n(k,k)} and the induction hypothesis \eqref{eq: H.P. (k,k)*(l)}.
    Setting $c=a+b$, we obtain the third term as follows:
    \begin{multline}
        \sum_{i=0}^{n}
        \sum_{a=0}^{i}
        \sum_{b=0}^{n-i}
        \binom{k+a-1}{a}
        \binom{l+b-1}{b}
        \bigl(
            \sigma_{i-a}(\boldsymbol{k})*\sigma_{n-i-b}(\boldsymbol{l}),k+a+l+b
        \bigr)
        \\
        \begin{aligned}[c]
            &=
            \sum_{c=0}^{n}
            \sum_{a=0}^{c}
            \sum_{i=a}^{n-c+a}
            \binom{k+a-1}{a}
            \binom{l+c-a-1}{c-a}
            \bigl(
                \sigma_{i-a}(\boldsymbol{k})*\sigma_{n-c+a-i}(\boldsymbol{l}),k+l+c
            \bigr)
            \\
            &=
            \sum_{c=0}^{n}
            \sum_{a=0}^{c}
            \binom{k+a-1}{a}
            \binom{l+c-a-1}{c-a}
            \bigl(
                \sigma_{n-c}(\boldsymbol{k}*\boldsymbol{l}),k+l+c
            \bigr)
            \\
            &=
            \sum_{c=0}^{n}
            \binom{k+l+c-1}{c}
            \bigl(
                \sigma_{n-c}(\boldsymbol{k}*\boldsymbol{l}),k+l+c
            \bigr)
            \\
            &=
            \sigma_n
            \bigl(
                \boldsymbol{k}*\boldsymbol{l},k+l
            \bigr),
        \end{aligned}
    \end{multline}
    where we use the induction hypothesis  \eqref{eq: H.P. k*l} in the second equality, 
    \eqref{eq: another sigma_n(k,k)} in the last equality, 
    and the following binomial identity in the third equality:
    \begin{equation}
        \sum_{a=0}^{c}
        \binom{k+a-1}{a}
        \binom{l+c-a-1}{c-a}
        =
        \binom{k+l+c-1}{c}.
    \end{equation}
    This identity follows from the Cauchy product of the generating functions of binomial coefficients:
    \begin{equation}
        \frac{1}{(1-x)^{n+1}} = \sum_{m\ge0} \binom{n+m}{m}\,x^m
        \in\mathbb{Z}\llbracket x \rrbracket 
        \qquad(n\in\mathbb{Z}_{\geq 0}).
    \end{equation}
    Hence, we finally obtain
    \begin{equation}
        \begin{split}
        \sum_{i=0}^n
        \sigma_i(\boldsymbol{k},k)
        *
        \sigma_{n-i}(\boldsymbol{l},l)
        &=
        \sigma_n
        \bigl(
            \boldsymbol{k}*(\boldsymbol{l},l),k
        \bigr)
        +
        \sigma_n 
        \bigl(
            (\boldsymbol{k},k)*\boldsymbol{l},l
        \bigr)
        +
        \sigma_n
        \bigl(
            \boldsymbol{k}*\boldsymbol{l},k+l
        \bigr)
        \\
        &=
        \sigma_n
        \bigl(
            (\boldsymbol{k},k)*(\boldsymbol{l},l)
        \bigr),
        \end{split}
    \end{equation}
    which proves the case of $n$, giving the desired result.
\end{proof}
We remark that the proof for the case of $n=1$ is given in \cite[Lemma 2.23]{HMS24}.
\begin{lemma}
    \label{lem: sum iIn-i=sn(I0(a,b))}
    For a nonnegative integer $n$ and an index $\boldsymbol{k}=(k_1,\ldots,k_r)$, we have 
    \begin{equation}
        \sum_{i=0}^n 
        {}_i I_{n-i}(\boldsymbol{k})
        =
        \sigma_n(I_0(\boldsymbol{k})).
    \end{equation}
\end{lemma}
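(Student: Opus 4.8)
The plan is to reduce the identity directly to Lemma~\ref{lem: sigma_n(k*l)=sum(sigma_i(k)*sigma_{n-i}(l))} by unwinding the definition of ${}_iI_{n-i}$ and interchanging two finite sums. First I would expand the left-hand side using the definition of ${}_mI_n$, renaming the internal summation index so that it does not clash with the outer index $i$ that splits $n$ as $i+(n-i)$:
\begin{equation}
    \sum_{i=0}^n {}_iI_{n-i}(\boldsymbol{k})
    =
    \sum_{i=0}^n \sum_{j=0}^r
    (-1)^{k_r+\cdots+k_{j+1}}
    \,\sigma_i(k_1,\ldots,k_j)*\sigma_{n-i}(k_r,\ldots,k_{j+1}).
\end{equation}
The key observation is that the sign $(-1)^{k_r+\cdots+k_{j+1}}$ depends only on the decomposition index $j$ of $\boldsymbol{k}$ and is independent of $i$; hence it may be treated as a constant when summing over $i$.

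Next I would interchange the two finite sums to bring $\sum_{i=0}^n$ inside, and then apply Lemma~\ref{lem: sigma_n(k*l)=sum(sigma_i(k)*sigma_{n-i}(l))} to the inner convolution with $\boldsymbol{a}=(k_1,\ldots,k_j)$ and $\boldsymbol{b}=(k_r,\ldots,k_{j+1})$. This replaces $\sum_{i=0}^n \sigma_i(\boldsymbol{a})*\sigma_{n-i}(\boldsymbol{b})$ by the single term $\sigma_n(\boldsymbol{a}*\boldsymbol{b})$, yielding
\begin{equation}
    \sum_{i=0}^n {}_iI_{n-i}(\boldsymbol{k})
    =
    \sum_{j=0}^r
    (-1)^{k_r+\cdots+k_{j+1}}
    \,\sigma_n\bigl((k_1,\ldots,k_j)*(k_r,\ldots,k_{j+1})\bigr).
\end{equation}
Finally I would factor $\sigma_n$ out of the sum over $j$ using its $\mathbb{Q}$-linearity and identify the remaining argument with $I_0(\boldsymbol{k})$. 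For this last step I record that $\sigma_0$ is the identity map on $\mathcal{I}$—the only admissible choice in its defining sum is $l_1=\cdots=l_r=0$, whose binomial weight is $1$—so that $I_0(\boldsymbol{k})={}_0I_0(\boldsymbol{k})=\sum_{j=0}^r(-1)^{k_r+\cdots+k_{j+1}}(k_1,\ldots,k_j)*(k_r,\ldots,k_{j+1})$ is precisely the expression to which $\sigma_n$ is applied.

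I do not expect a genuine obstacle here: the combinatorial content is carried entirely by Lemma~\ref{lem: sigma_n(k*l)=sum(sigma_i(k)*sigma_{n-i}(l))}, and the present statement only repackages that convolution identity through the duality-type definition of ${}_mI_n$. The only points needing care are the clean separation of the two summation indices (so that $i$ runs over the splitting of $n$ while $j$ indexes the internal decomposition of $\boldsymbol{k}$), the independence of the sign from $i$, and the identification $\sigma_0=\mathrm{id}$ used to read off $I_0(\boldsymbol{k})$.
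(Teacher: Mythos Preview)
Your proposal is correct and follows essentially the same argument as the paper: expand by the definition of ${}_iI_{n-i}$, interchange the two finite sums, apply Lemma~\ref{lem: sigma_n(k*l)=sum(sigma_i(k)*sigma_{n-i}(l))} to the inner convolution, and then use the $\mathbb{Q}$-linearity of $\sigma_n$ to recognize $I_0(\boldsymbol{k})$. Your explicit remark that $\sigma_0=\mathrm{id}$ is a helpful clarification but does not change the approach.
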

\begin{proof}
    By Lemma \ref{lem: sigma_n(k*l)=sum(sigma_i(k)*sigma_{n-i}(l))}, we have
    \begin{equation}
        \begin{split}
            \sum_{i=0}^n 
            {}_i I_{n-i}(k_1,\ldots,k_r)
            &=
            \sum_{i=0}^n 
            \biggl(
                \sum_{j=0}^r 
                (-1)^{k_r+\cdots + k_{j+1}}
                \sigma_i(k_1,\ldots,k_{j})
                *
                \sigma_{n-i}(k_{r},\ldots,k_{j+1})
            \biggr)
            \\
            &=
            \sum_{j=0}^r 
            (-1)^{k_r+\cdots + k_{j+1}}
            \sum_{i=0}^n 
            \biggl(
                \sigma_i(k_1,\ldots,k_{j})
                *
                \sigma_{n-i}(k_{r},\ldots,k_{j+1})
            \biggr)
            \\
            &=
            \sum_{j=0}^r 
            (-1)^{k_r+\cdots + k_{j+1}}
            \sigma_n
            \bigl(
                (k_1,\ldots,k_{j})
                *
                (k_{r},\ldots,k_{j+1})
            \bigr)
            \\
            &=
            \sigma_n(I_0(k_1,\ldots,k_r)),
        \end{split}
    \end{equation}
    which implies the claim.
\end{proof}
\begin{lemma}
    \label{lem: I2+1I1+2I=s2(I0(a,b))}
    For an index $(k_1,\ldots,k_r)$ with $k_1+\cdots+k_r$ even, we have 
    \begin{equation}
        I_2(k_1,\ldots,k_r)
        +
        {}_1I_1(k_1,\ldots,k_r)
        +
        I_2(k_r,\ldots,k_1)
        =
        \sigma_2(I_0(k_1,\ldots,k_r)).
    \end{equation}
\end{lemma}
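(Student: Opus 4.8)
The plan is to derive this directly from Lemma~\ref{lem: sum iIn-i=sn(I0(a,b))} specialized to $n=2$. That lemma gives
\[
{}_0I_2(\boldsymbol{k}) + {}_1I_1(\boldsymbol{k}) + {}_2I_0(\boldsymbol{k}) = \sigma_2(I_0(\boldsymbol{k})),
\]
and since ${}_0I_2 = I_2$ under our abbreviation, it remains only to prove the reversal identity
\[
{}_2I_0(k_1,\ldots,k_r) = I_2(k_r,\ldots,k_1),
\]
valid whenever $k_1+\cdots+k_r$ is even. Substituting this into the display above yields exactly the asserted formula, so the entire content of the lemma is concentrated in this single reversal identity.

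To establish that identity, I would unwind both sides from the definition of ${}_mI_n$. On the left,
\[
{}_2I_0(k_1,\ldots,k_r) = \sum_{i=0}^r (-1)^{k_{i+1}+\cdots+k_r}\, \sigma_2(k_1,\ldots,k_i) * \sigma_0(k_r,\ldots,k_{i+1}).
\]
On the right, writing $\boldsymbol{k}' = (k_r,\ldots,k_1)$ for the reversed index and applying the definition of $I_2 = {}_0I_2$ to $\boldsymbol{k}'$, I would perform the change of summation variable $j = r-i$. Using $k'_m = k_{r+1-m}$, the factor $\sigma_0(k'_1,\ldots,k'_i)$ becomes $\sigma_0(k_r,\ldots,k_{j+1})$, the factor $\sigma_2(k'_r,\ldots,k'_{i+1})$ becomes $\sigma_2(k_1,\ldots,k_j)$, and the sign becomes $(-1)^{k_1+\cdots+k_j}$. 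Invoking commutativity of the harmonic product $*$ to swap the two factors then rewrites $I_2(k_r,\ldots,k_1)$ as $\sum_{j=0}^r (-1)^{k_1+\cdots+k_j}\, \sigma_2(k_1,\ldots,k_j) * \sigma_0(k_r,\ldots,k_{j+1})$, which is term-by-term identical to the expression for ${}_2I_0(k_1,\ldots,k_r)$ above, save for the sign.

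The one genuine point — and the step I expect to require the hypothesis — is matching the two signs: the $i$-th term of ${}_2I_0$ carries $(-1)^{k_{i+1}+\cdots+k_r}$, whereas the corresponding term of the reversed $I_2$ carries $(-1)^{k_1+\cdots+k_i}$. These agree precisely when $(k_1+\cdots+k_i)+(k_{i+1}+\cdots+k_r)=k_1+\cdots+k_r$ is even, which is exactly the standing assumption. This is where evenness of the total weight enters, and it is essentially the only nontrivial ingredient; everything else is the $n=2$ instance of the previous lemma together with commutativity of $*$ and reindexing.
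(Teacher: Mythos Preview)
Your proof is correct and follows essentially the same approach as the paper's own proof: both specialize Lemma~\ref{lem: sum iIn-i=sn(I0(a,b))} to $n=2$ and then establish the reversal identity ${}_2I_0(k_1,\ldots,k_r)={}_0I_2(k_r,\ldots,k_1)$ under the evenness hypothesis. The paper compresses the reversal into a single displayed computation (factoring out $(-1)^{k_1+\cdots+k_r}$ from the sign and using $\sigma_0=\mathrm{id}$), while you spell out the reindexing $j=r-i$ and the role of commutativity of $*$ in more detail, but the arguments are the same in substance.
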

\begin{proof}
    This follows from Lemma \ref{lem: sum iIn-i=sn(I0(a,b))} and 
    \begin{equation}
        \begin{split}
            {}_2I_0(k_1,\ldots,k_r)
            &=
            (-1)^{k_1+\cdots+k_r}
            \sum_{i=0}^r 
            (-1)^{k_1+\cdots+k_{i}}
            (k_{r},\ldots,k_{i+1})
            *
            \sigma_2(k_1,\ldots,k_{i})
            \\
            &=
            {}_0I_2(k_r,\ldots,k_1).
        \end{split}
    \end{equation}
\end{proof}
Let $\mysh$ denote the shuffle of indices, e.g., $(a,b)\ \mysh\  (c)=(c,a,b)+(a,c,b)+(a,b,c) $.
\begin{lemma}[cf.\ {\cite[Lemma 2.16]{HMS24}}]
    \label{lem: b mysh a and bb mysh a}
    Let $n$ be a nonnegative integer.
    If $a$ and $b$ are positive integers, then we have
    \begin{align}
        \label{eq:b1a}
        (b)\widetilde{\sh}(\{a\}^n)
        &=
        \sum_{i=0}^n
        (-1)^i
        (ai+b)*(\{a\}^{n-i}),
        \\
        \label{eq:b2a}
        (b,b)\widetilde{\sh}(\{a\}^n)
        &=
        \sum_{\substack{
            i,j\geq 0
            \\
            0\leq i+j\leq n
        }}
        (-1)^{i+j}
        (ai+b,aj+b)*(\{a\}^{n-i-j}).
    \end{align}
\end{lemma}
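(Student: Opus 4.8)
The plan is to prove \eqref{eq:b1a} and \eqref{eq:b2a} by induction on $n$, the base case $n=0$ being immediate since both sides collapse to $(b)$ and $(b,b)$ respectively. The engine on the shuffle side is the recursion for $\mysh$ on the last letter: because $(b)$ ends in $b$ and $(\{a\}^n)$ ends in $a$, one has
\begin{equation}
(b)\mysh(\{a\}^n) = (\{a\}^n, b) + \bigl((b)\mysh(\{a\}^{n-1}), a\bigr),
\end{equation}
and likewise
\begin{equation}
(b,b)\mysh(\{a\}^n) = \bigl((b)\mysh(\{a\}^n), b\bigr) + \bigl((b,b)\mysh(\{a\}^{n-1}), a\bigr).
\end{equation}
The goal is to show that the right-hand sides of \eqref{eq:b1a} and \eqref{eq:b2a} obey exactly these two recursions.

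For \eqref{eq:b1a}, denote its right-hand side by $R_n$. The one computation I need is an append-$a$ identity extracted from the harmonic recursion with empty first factor: for a single letter $c$ and $m\geq 0$,
\begin{equation}
\bigl((c)*(\{a\}^m), a\bigr) = (c)*(\{a\}^{m+1}) - (\{a\}^{m+1}, c) - (\{a\}^m, c+a).
\end{equation}
Substituting $c = ai+b$ into $(R_{n-1}, a)$ breaks it into three alternating sums; the first reconstitutes $R_n$ minus its top term, and after the shift $j=i+1$ the remaining two sums telescope against one another, leaving only $(\{a\}^n, b)$. This gives $R_n = (\{a\}^n, b) + (R_{n-1}, a)$, matching the shuffle recursion and the base case, so \eqref{eq:b1a} follows.

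For \eqref{eq:b2a}, write $Q_n$ for its right-hand side and feed in the already-proved \eqref{eq:b1a}. The two-letter analogue of the append-$a$ identity,
\begin{multline}
\bigl((ai+b, aj+b)*(\{a\}^m), a\bigr)
= (ai+b, aj+b)*(\{a\}^{m+1})
\\
- \bigl((ai+b)*(\{a\}^{m+1}), aj+b\bigr)
- \bigl((ai+b)*(\{a\}^m), a(j+1)+b\bigr),
\end{multline}
turns $(Q_{n-1}, a)$ into three double sums. The first recovers $Q_n$ minus its diagonal $i+j=n$; the substitution $j\mapsto j+1$ makes the other two cancel on their common range, and the surviving boundary strata recombine: the single-letter stuffle terms assemble, via \eqref{eq:b1a} read as $(R_n, b) = \sum_{i=0}^{n}(-1)^i\bigl((ai+b)*(\{a\}^{n-i}), b\bigr)$, into $-(R_n, b)$ together with the $j=0$ diagonal term, while the complementary boundary restores the full diagonal $i+j=n$. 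All terms off $Q_n$ cancel, yielding $Q_n = (R_n, b) + (Q_{n-1}, a)$, which is exactly the shuffle recursion above.

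The main obstacle is combinatorial bookkeeping rather than any conceptual difficulty: in each part the appended letter generates alternating sums whose cancellation hinges on a precise reindexing (the shifts $j=i+1$ and $j\mapsto j+1$) and on isolating the boundary terms of each summation range correctly. The two-letter case is the delicate one, since a diagonal $i+j=n$ and two boundary strata must be tracked simultaneously, and one must verify that the leftover single-letter stuffle terms reassemble into $(R_n, b)$ exactly — which is precisely where invoking \eqref{eq:b1a} is indispensable.
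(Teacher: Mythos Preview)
Your proof is correct and takes a genuinely different route from the paper's.

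The paper omits \eqref{eq:b1a} entirely (deferring to \cite{HMS24}) and attacks \eqref{eq:b2a} by a direct computation on the stuffle side: it splits $\sum_{i+j\le n}(-1)^{i+j}(ai+b,aj+b)*(\{a\}^{n-i-j})$ according to whether $i+j\le n-2$, $i+j=n-1$, or $i+j=n$, rewrites the bulk using the identity
\[
(c_1,c_2)*(\{a\}^m)=(c_1,c_2)\mysh(\{a\}^m)+(c_1+a,c_2)\mysh(\{a\}^{m-1})+(c_1,c_2+a)\mysh(\{a\}^{m-1})+(c_1+a,c_2+a)*(\{a\}^{m-2}),
\]
and then carries out several index shifts ($i+1\mapsto i$, $j+1\mapsto j$) so that the resulting four shuffle-sums collapse to $(b,b)\mysh(\{a\}^n)$ plus boundary terms that cancel. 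Your argument instead runs by induction on $n$: you show that both the shuffle side and the stuffle side of each identity satisfy the same last-letter recursion, using the append-$a$ identities $((c)*(\{a\}^m),a)=(c)*(\{a\}^{m+1})-(\{a\}^{m+1},c)-(\{a\}^m,c+a)$ and its two-letter analogue, together with the already-established \eqref{eq:b1a} to assemble $(R_n,b)$.

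What each buys: the paper's approach is self-contained at the level of the double sum (no appeal to an induction hypothesis), but demands tracking four shuffle-sums and several strata simultaneously. Your induction reduces the combinatorics to a single telescoping step in each case, and it also supplies a proof of \eqref{eq:b1a} rather than citing it; moreover, the structure of your argument visibly generalises to $(b,\ldots,b)\mysh(\{a\}^n)$ for longer words, whereas the paper's four-term expansion is specific to length two.
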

\begin{proof}
    The proof of \eqref{eq:b1a} can be found in \cite[Lemma 2.16]{HMS24}, so we omit it here.
    We now proceed to prove \eqref{eq:b2a}.
    The right-hand side of \eqref{eq:b2a} can be written as
    \begin{multline}
        \label{eq: (ai+b,aj+b)*(a^n)}
        \sum_{\substack{
            i,j\geq 0
            \\
            0\leq i+j\leq n
        }}
        (-1)^{i+j}
        (ai+b,aj+b)*(\{a\}^{n-i-j})
        \\
        \begin{aligned}[c] 
            &=
            \sum_{\substack{
                i,j\geq 0
                \\
                0\leq i+j\leq n-2
            }}        
            (-1)^{i+j}
            (ai+b,aj+b)\widetilde{\sh}(\{a\}^{n-i-j})
            \\&\qquad
            +
            \sum_{\substack{
                i,j\geq 0
                \\
                0\leq i+j\leq n-2
            }}        
            (-1)^{i+j}
            (a(i+1)+b,aj+b)\widetilde{\sh}(\{a\}^{n-i-1-j})
            \\&\qquad
            +
            \sum_{\substack{
                i,j\geq 0
                \\
                0\leq i+j\leq n-2
            }}        
            (-1)^{i+j}
            (ai+b,a(j+1)+b)\widetilde{\sh}(\{a\}^{n-i-j-1})
            \\&\qquad
            +
            \sum_{\substack{
                i,j\geq 0
                \\
                0\leq i+j\leq n-2
            }}        
            (-1)^{i+j}
            (a(i+1)+b,a(j+1)+b)\widetilde{\sh}(\{a\}^{n-i-j-2})
            \\&\qquad
            +
            (-1)^{n-1}
            \sum_{\substack{ i,j\geq 0\\i+j=n-1}}
            (ai+b,aj+b)*(a)
            +
            (-1)^n
            \sum_{\substack{ i,j\geq 0\\i+j=n}}
            (ai+b,aj+b).
        \end{aligned}
    \end{multline}
    The first term in \eqref{eq: (ai+b,aj+b)*(a^n)} is given by 
    \begin{multline}
        \label{eq:1 in (ai+b,aj+b)*(a^n)}
        \sum_{\substack{i,j\geq 0 \\ 0\leq i+j\leq n-2}}
        (-1)^{i+j}
        (ai+b,aj+b)\widetilde{\sh}(\{a\}^{n-i-j})
        \\
        =
        \sum_{\substack{i,j\geq 0 \\ 0\leq i+j\leq n-1}}
        (-1)^{i+j}
        (ai+b,aj+b)\widetilde{\sh}(\{a\}^{n-i-j})
        -(-1)^{n-1}
        \sum_{\substack{i,j\geq 0 \\i+j=n-1}}
        (ai+b,aj+b)\widetilde{\sh}(a).
    \end{multline}
    By shifting as $i+1\mapsto i$, the second term in \eqref{eq: (ai+b,aj+b)*(a^n)} is simplified to
    \begin{multline}
        \label{eq:2 in (ai+b,aj+b)*(a^n)}
        \sum_{\substack{i,j\geq 0 \\ 0\leq i+j\leq n-2}}
        (-1)^{i+j}
        (a(i+1)+b,aj+b)\widetilde{\sh}(\{a\}^{n-i-1-j})
        \\
        \begin{aligned}[c]
            &=
            -
            \sum_{\substack{
                i\geq 1
                ,
                j\geq 0
                \\
                1\leq i+j\leq n-1
            }}
            (-1)^{i+j}
            (ai+b,aj+b)\widetilde{\sh}(\{a\}^{n-i-j})
            \\
            &=
            -
            \sum_{\substack{i,j\geq 0 \\ 0\leq i+j\leq n-1}}
            (-1)^{i+j}
            (ai+b,aj+b)\widetilde{\sh}(\{a\}^{n-i-j})
            +
            \sum_{j=0}^{n-1}
            (-1)^{j}
            (b,aj+b)\widetilde{\sh}(\{a\}^{n-j}).
        \end{aligned}
    \end{multline}
    Similarly, by putting $j+1\mapsto j$ the third term in \eqref{eq: (ai+b,aj+b)*(a^n)} is given by
    \begin{multline}
        \label{eq:3 in (ai+b,aj+b)*(a^n)}
        \sum_{\substack{i,j\geq 0 \\ 0\leq i+j\leq n-2}}
        (-1)^{i+j}
        (ai+b,a(j+1)+b)\widetilde{\sh}(\{a\}^{n-i-j-1})
        \\
        =
        -
        \sum_{\substack{i,j\geq 0 \\ 0\leq i+j\leq n-1}}
        (-1)^{i+j}
        (ai+b,aj+b)\widetilde{\sh}(\{a\}^{n-i-j})
        +
        \sum_{i=0}^{n-1}
        (-1)^{i}
        (ai+b,b)\widetilde{\sh}(\{a\}^{n-i}).
    \end{multline}
    By applying the transformations $i+1\mapsto i$  and  $j+1 \mapsto j$,
    the fourth term in \eqref{eq: (ai+b,aj+b)*(a^n)} is given by
    \begin{multline}
        \label{eq:4 in (ai+b,aj+b)*(a^n)}
        \sum_{\substack{i,j\geq 0 \\ 0\leq i+j\leq n-2}}
            (-1)^{i+j}
            (a(i+1)+b,a(j+1)+b)\widetilde{\sh}(\{a\}^{n-i-j-2})
            \\
            \begin{aligned}[c]
                &=
                \sum_{\substack{i,j\geq 0 \\ 0\leq i+j\leq n-1}}
                (-1)^{i+j}
                (ai+b,aj+b)\widetilde{\sh}(\{a\}^{n-i-j})
                +
                \sum_{\substack{
                    i,j\geq 0
                    \\
                    i+j=n
                }}
                (-1)^{n}
                (ai+b,aj+b)
                \\
                &\qquad
                -
                (b,b)\mysh(\{a\}^n)
                -
                \sum_{i=1}^n
                (-1)^i
                (ai+b,b)\mysh(\{a\}^{n-i})
                -
                \sum_{j=1}^n
                (-1)^j
                (b,aj+b)\mysh(\{a\}^{n-j}).
            \end{aligned}
    \end{multline}
    Thus, by applying 
    \eqref{eq:1 in (ai+b,aj+b)*(a^n)}, 
    \eqref{eq:2 in (ai+b,aj+b)*(a^n)}, 
    \eqref{eq:3 in (ai+b,aj+b)*(a^n)} and 
    \eqref{eq:4 in (ai+b,aj+b)*(a^n)},
    we find that 
    \eqref{eq: (ai+b,aj+b)*(a^n)} is equal to
    \begin{multline}
        (-1)^{n-1}
        \sum_{\substack{i,j\geq 0 \\i+j=n-1}}
        (ai+b,aj+b)*(a)
        +
        (-1)^n
        \sum_{\substack{i,j\geq 0\\i+j=n}}
        (ai+b,aj+b)
        \\
        \begin{aligned}[c]
            &\quad
            -(-1)^{n-1}
            \sum_{\substack{i,j\geq 0 \\i+j=n-1}}
            (ai+b,aj+b)\widetilde{\sh}(a)
            \\
            &\quad
            +
            \sum_{j=0}^{n-1}
            (-1)^{j}
            (b,aj+b)\widetilde{\sh}(\{a\}^{n-j})
            +
            \sum_{i=0}^{n-1}
            (-1)^{i}
            (ai+b,b)\widetilde{\sh}(\{a\}^{n-i})
            \\
            &\quad
            +
            \sum_{\substack{i,j\geq 0\\i+j=n}}
            (-1)^{n}
            (ai+b,aj+b)
            \\
            &\qquad
            -
            (b,b)\mysh(\{a\}^n)
            -
            \sum_{i=1}^n
            (-1)^i
            (ai+b,b)\mysh(\{a\}^{n-i})
            -
            \sum_{j=1}^n
            (-1)^j
            (b,aj+b)\mysh(\{a\}^{n-j})
            \\
            &=
            (-1)^{n-1}
            \sum_{\substack{i,j\geq 0 \\i+j=n-1}}
            \biggl(
                (a(i+1)+b,aj+b)
                +
                (ai+b,a(j+1)+b)
            \biggr)
            \\
            &\qquad
            +2
            \sum_{\substack{i,j\geq 0\\i+j=n}}
            (-1)^n
            (ai+b,aj+b)
            +(b,b)\mysh(\{a\}^n)
            +(-1)^n(an+b,b)
            +(-1)^n(b,an+b).
        \end{aligned}
    \end{multline}
    Since
    \begin{equation}
        \begin{split}
            \sum_{\substack{i,j\geq 0 \\i+j=n-1}}
            (a(i+1)+b,aj+b)
            &=
            \sum_{\substack{
                i,j\geq 0
                \\
                i+j=n
            }}
            (ai+b,aj+b)
            -(b,an+b),
        \end{split}
    \end{equation}
    we have
    \begin{multline}
        \sum_{
            \substack{
                i,j\geq 0
                \\
                0\leq i+j\leq n
            }}
        (-1)^{i+j}
        (ai+b,aj+b)*(\{a\}^{n-i-j})
        \\
        \begin{aligned}[c]
            &=
            2\sum_{\substack{i,j\geq 0\\i+j=n}}
            (-1)^{n-1}
            (ai+b,aj+b)
            -
            (-1)^{n-1}(an+b,b)
            -
            (-1)^{n-1}(b,an+b)
            \\
            &\qquad
            +2
            \sum_{\substack{i,j\geq 0\\i+j=n}}
            (-1)^n
            (ai+b,aj+b)
            +(b,b)\mysh(\{a\}^n)
            +(-1)^n(an+b,b)
            +(-1)^n(b,an+b)
            \\
            &=
            (b,b)\mysh(\{a\}^n).
        \end{aligned}
    \end{multline}
    Therefore, we obtain the required formula.
\end{proof}
\begin{lemma}
    \label{lem:sigma2 k^n}
    For positive integers $k$ and $n$, we have
    \begin{equation}
        \begin{split}
            \sigma_2(\{k\}^n)
            &=
            \frac{(k+1)k}{2}
            \sum_{i=0}^{n-1}
            (-1)^i
            (k(i+1)+2)*(\{k\}^{n-i-1})
            \\
            &\qquad
            +k^2
            \sum_{\substack{i,j\geq 0 \\ 0\leq i+j\leq n-2}}
            (-1)^{i+j}
            (k(i+1)+1,k(j+1)+1)
            *
            (\{k\}^{n-i-j-2}).
        \end{split}
    \end{equation}
\end{lemma}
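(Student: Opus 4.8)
The plan is to reduce the claim to the combinatorial meaning of the shuffle product and then invoke Lemma~\ref{lem: b mysh a and bb mysh a}. First I would expand $\sigma_2(\{k\}^n)$ directly from its definition. Since $\sigma_2$ distributes a total weight of $2$ over the $n$ components, only two kinds of tuples contribute: those obtained by adding $2$ to a single component, and those obtained by adding $1$ to each of two distinct components. The attached binomial factors are $\binom{k+1}{2}=\frac{(k+1)k}{2}$ and $\binom{k}{1}^2=k^2$ respectively, giving
\begin{equation*}
\sigma_2(\{k\}^n)
=
\frac{(k+1)k}{2}\sum_{p=1}^n(\{k\}^{p-1},k+2,\{k\}^{n-p})
+
k^2\sum_{1\le p<q\le n}(\{k\}^{p-1},k+1,\{k\}^{q-p-1},k+1,\{k\}^{n-q}).
\end{equation*}

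Next I would recognize the two sums as shuffle products. By the combinatorial definition of $\mysh$, inserting the single letter $k+2$ into $(\{k\}^{n-1})$ in each of the $n$ possible positions yields exactly $(k+2)\mysh(\{k\}^{n-1})$, while interleaving the two entries of $(k+1,k+1)$ with $(\{k\}^{n-2})$ produces each word $(\{k\}^{p-1},k+1,\{k\}^{q-p-1},k+1,\{k\}^{n-q})$ with $1\le p<q\le n$ exactly once, so this sum equals $(k+1,k+1)\mysh(\{k\}^{n-2})$. Hence $\sigma_2(\{k\}^n)=\frac{(k+1)k}{2}\,(k+2)\mysh(\{k\}^{n-1})+k^2\,(k+1,k+1)\mysh(\{k\}^{n-2})$.

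Finally I would rewrite these shuffles using Lemma~\ref{lem: b mysh a and bb mysh a}. Applying \eqref{eq:b1a} with $a=k$, $b=k+2$ and $n$ replaced by $n-1$ turns $(k+2)\mysh(\{k\}^{n-1})$ into $\sum_{i=0}^{n-1}(-1)^i(k(i+1)+2)*(\{k\}^{n-i-1})$, since $ki+k+2=k(i+1)+2$; applying \eqref{eq:b2a} with $a=k$, $b=k+1$ and $n$ replaced by $n-2$ turns $(k+1,k+1)\mysh(\{k\}^{n-2})$ into $\sum_{i,j\ge 0,\,0\le i+j\le n-2}(-1)^{i+j}(k(i+1)+1,k(j+1)+1)*(\{k\}^{n-i-j-2})$. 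Substituting these two expressions reproduces exactly the right-hand side of the lemma.

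I expect the only delicate point to be the counting in the second step: one must check that no multiplicity factor appears when shuffling the repeated entry $(k+1,k+1)$. This holds because the shuffle preserves the relative order of the two identical letters, so distinct position pairs $p<q$ correspond to distinct interleavings, each occurring once. The degenerate cases $n\le 1$ are handled automatically by the empty-sum convention, since the relevant sums and shuffle factors then vanish or reduce to the single-term contribution.
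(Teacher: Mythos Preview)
Your argument is correct and is exactly the approach the paper takes: the paper's own proof is the single sentence ``The definition of $\sigma_2$ and Lemma~\ref{lem: b mysh a and bb mysh a} together lead to the assertion,'' and you have simply spelled out the two intermediate steps (splitting the weight-$2$ distributions into the $2$-on-one and $1+1$ cases, recognizing the resulting sums as $(k+2)\,\mysh\,(\{k\}^{n-1})$ and $(k+1,k+1)\,\mysh\,(\{k\}^{n-2})$, and then applying \eqref{eq:b1a} and \eqref{eq:b2a}). The multiplicity remark for the shuffle of $(k+1,k+1)$ and the handling of the boundary case $n=1$ via the empty-sum convention are both fine.
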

\begin{proof}
    The definition of $\sigma_2$ and Lemma \ref{lem: b mysh a and bb mysh a} together lead to the assertion.
\end{proof}
\begin{lemma}[cf.\ {\cite[Lemma 2.24]{HMS24}}]
    \label{lem: I0(a,b)=(-1)^n(a+b)^n}
    Let $n$ be a nonnegative integer.
    If $a$ and $b$ are odd positive integers, then we have 
    \begin{equation}
        I_0(\{a,b\}^n)
        = 
        (-1)^n(\{a+b\}^n).
    \end{equation}
\end{lemma}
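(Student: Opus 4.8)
The plan is to prove the identity by induction on $n$, reducing it to the single recursion
$I_0(\{a,b\}^n) = -\bigl(I_0(\{a,b\}^{n-1}),\,a+b\bigr)$,
where $(\,\cdot\,,a+b)$ means appending the single part $a+b$ to every index occurring in the argument. Granting this recursion, the base case $I_0(\emptyset)=\emptyset$ together with a one-line induction gives the claim: appending $a+b$ to $(-1)^{n-1}(\{a+b\}^{n-1})$ and negating yields $(-1)^{n}(\{a+b\}^n)$. So the whole task is to establish that recursion.

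First I would expand $I_0(\{a,b\}^n)$ directly from its definition (recall $\sigma_0=\mathrm{id}$), writing it as $\sum_{i=0}^{2n}(-1)^{k_{i+1}+\cdots+k_{2n}}\,(k_1,\dots,k_i)*(k_{2n},\dots,k_{i+1})$ for $\boldsymbol{k}=\{a,b\}^n$. Since $a,b$ are odd and the total weight $n(a+b)$ is even, the sign collapses to $(-1)^i$. The key structural remark is that any two consecutive entries of an alternating $\{a,b\}$-index sum to $a+b$; hence, when I apply the recursive definition of $*$ to peel the last letters $k_i$ (from the prefix) and $k_{i+1}$ (from the reversed suffix) off each interior product $1\le i\le 2n-1$, I obtain three pieces $T_1(i),T_2(i),T_3(i)$ ending respectively in $k_i$, in $k_{i+1}$, and in the merge $k_i+k_{i+1}=a+b$.

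The heart of the argument is a telescoping cancellation. One checks the shift identity $T_2(i)=T_1(i+1)$, so that $\sum_{i=1}^{2n-1}(-1)^i\bigl(T_1(i)+T_2(i)\bigr)$ collapses to $-T_1(1)-T_1(2n)$. These two survivors equal the reversed index and the index itself, namely $(k_{2n},\dots,k_1)$ and $(k_1,\dots,k_{2n})$, which cancel exactly against the two boundary terms $i=0$ and $i=2n$. What remains is only $\sum_{i=1}^{2n-1}(-1)^i T_3(i)$, that is, $\bigl(\sum_{i=1}^{2n-1}(-1)^i\,(k_1,\dots,k_{i-1})*(k_{2n},\dots,k_{i+2})\bigr)$ with $a+b$ appended. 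Finally I would recognize this surviving sum as $-I_0(\{a,b\}^{n-1})$: deleting the two middle entries $k_i,k_{i+1}$ leaves $(k_1,\dots,k_{i-1})$ and $(k_{2n},\dots,k_{i+2})$, which are precisely the prefix and the reversed suffix of the shortened index $\{a,b\}^{n-1}$ — a fact independent of the parity of $i$, since deleting any consecutive $(a,b)$- or $(b,a)$-pair from an alternating sequence again produces $\{a,b\}^{n-1}$. Reindexing $j=i-1$ and reusing the sign collapse then converts the sum into $-\sum_j(-1)^j(k_1,\dots,k_j)*(k_{2n}',\dots,k_{j+1}')=-I_0(\{a,b\}^{n-1})$, completing the recursion.

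The main obstacle I expect is the bookkeeping in the cancellation: I must match the telescoped remainders $-T_1(1),-T_1(2n)$ against the boundary terms with the correct signs and reversals, and verify the parity-independent identification of the reduced index. Both hinge on the two features that make the whole computation collapse — the oddness of $a$ and $b$ (which turns every sign into $(-1)^i$) and the commutativity of $*$ — so I would foreground these at the start and carry them through each step.
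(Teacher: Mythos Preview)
Your argument is correct. The recursion $I_0(\{a,b\}^n)=-\bigl(I_0(\{a,b\}^{n-1}),\,a+b\bigr)$ is exactly what drops out once you expand the stuffle product at the last letters, and the telescoping you describe works as stated: $T_2(i)=T_1(i+1)$ gives the collapse to $-T_1(1)-T_1(2n)$, these survivors are precisely the reversed and unreversed index, and they cancel against the $i=0$ and $i=2n$ boundary terms. The identification of the residual $T_3$-sum with $-I_0(\{a,b\}^{n-1})$ after the shift $j=i-1$ also checks out, since $(k_1,\dots,k_{i-1})$ is literally the prefix of the shortened index and $(k_{2n},\dots,k_{i+2})$ is again an alternating $(b,a,\dots)$ sequence of the correct length $2n-2-j$, hence equals $(k'_{2n-2},\dots,k'_{j+1})$. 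One small remark: you list commutativity of $*$ as a crucial ingredient, but your argument never actually invokes it---everything goes through via the alternating pattern alone.

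As for comparison with the paper: the paper does not give its own proof of this lemma, but simply cites \cite[Lemma~2.24]{HMS24}. Your write-up therefore supplies a self-contained inductive argument where the paper defers to the literature.
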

\begin{proof}
    The proof can be found in \cite[Lemma 2.24]{HMS24}.
    Thus, we omit it here.
\end{proof}
Next, we rephrase Lemma \ref{lem: I2+1I1+2I=s2(I0(a,b))} in terms of MZVs.
\begin{lemma}
    \label{lem: zeta(I2+1I1+2I)}
    Let $n$ be a positive integer.
    If $a$ and $b$ are odd positive integers, then we have 
    \begin{multline}
        \label{eq: I2+1I1+2I}
        \zeta^*
        (I_2(\{a,b\}^n))
        +
        \zeta^*
        ({}_1I_1(\{a,b\}^n))
        +
        \zeta^*
        (I_2(\{b,a\}^n))
        \\\equiv
        \frac{(a+b)^2}{2}
        \sum_{\substack{
            n_1,n_2\geq 0
            \\
            n_1+n_2=n
        }}
        \zeta^*\bigl((a+b)n_1+1\bigr)
        \zeta^*\bigl((a+b)n_2+1\bigr)
        \pmod{\pi^2}.
    \end{multline}
\end{lemma}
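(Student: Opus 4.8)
The plan is to recognize the left-hand side as a single $\sigma_2$-expression and then discard every term divisible by $\pi^2$. Since $a$ and $b$ are odd, the index $(\{a,b\}^n)$ has even weight $n(a+b)$ and its reversal is $(\{b,a\}^n)$; hence Lemma~\ref{lem: I2+1I1+2I=s2(I0(a,b))} applies, and applying the $\mathbb{Q}$-linear map $\zeta^*$ to the identity there shows that the left-hand side of \eqref{eq: I2+1I1+2I} equals $\zeta^*(\sigma_2(I_0(\{a,b\}^n)))$. By Lemma~\ref{lem: I0(a,b)=(-1)^n(a+b)^n} we have $I_0(\{a,b\}^n)=(-1)^n(\{a+b\}^n)$, so setting $k:=a+b$ (an even integer, $k\geq 2$) it remains to compute $(-1)^n\zeta^*(\sigma_2(\{k\}^n))$ modulo $\pi^2$.

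I would then expand $\sigma_2(\{k\}^n)$ by Lemma~\ref{lem:sigma2 k^n}, apply $\zeta^*$, and use its multiplicativity for $*$ on admissible indices together with two vanishing facts: for even $s\geq 2$, $\zeta^*(s)\in\mathbb{Q}\pi^{s}\subseteq\pi^2\mathcal{Z}$; and for even $k$, the classical evaluation $\zeta^*(\{k\}^m)\in\mathbb{Q}\pi^{km}$ gives $\zeta^*(\{k\}^m)\in\pi^2\mathcal{Z}$ whenever $m\geq 1$ (all exponents $s$, $km$, and $kn+2$ below are even because $k$ is). The first sum in Lemma~\ref{lem:sigma2 k^n} carries the factor $\zeta^*(k(i+1)+2)$ with $k(i+1)+2\geq 4$ even, so it vanishes entirely modulo $\pi^2$. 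In the second sum each term carries the factor $\zeta^*(\{k\}^{n-i-j-2})$, so only the terms with $n-i-j-2=0$ survive; reindexing $n_1=i+1$, $n_2=j+1$, their contribution modulo $\pi^2$ is
\begin{equation}
    k^2(-1)^{n}\sum_{\substack{n_1,n_2\geq 1\\ n_1+n_2=n}}\zeta^*(kn_1+1,kn_2+1).
\end{equation}

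The crucial step is to evaluate this symmetric sum of even-weight double zeta values, which are not individually expressible through Riemann zeta values. I would symmetrize via the harmonic product identity $(kn_1+1)*(kn_2+1)=(kn_1+1,kn_2+1)+(kn_2+1,kn_1+1)+(kn+2)$, which yields
\begin{equation}
    \zeta^*(kn_1+1,kn_2+1)+\zeta^*(kn_2+1,kn_1+1)=\zeta^*(kn_1+1)\zeta^*(kn_2+1)-\zeta^*(kn+2).
\end{equation}
Since $kn+2$ is even, $\zeta^*(kn+2)\in\pi^2\mathcal{Z}$, so modulo $\pi^2$ the symmetric double sum collapses to $\tfrac12\sum\zeta^*(kn_1+1)\zeta^*(kn_2+1)$. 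Taking into account the sign $(-1)^n\cdot(-1)^n=1$, this gives
\begin{equation}
    (-1)^n\zeta^*(\sigma_2(\{k\}^n))\equiv\frac{k^2}{2}\sum_{\substack{n_1,n_2\geq 1\\ n_1+n_2=n}}\zeta^*(kn_1+1)\zeta^*(kn_2+1)\pmod{\pi^2}.
\end{equation}

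Finally, because $\zeta^*(1)=0$ and $n\geq 1$, every term with $n_1=0$ or $n_2=0$ vanishes, so the summation range may be enlarged to $n_1,n_2\geq 0$; substituting $k=a+b$ then produces exactly the right-hand side of \eqref{eq: I2+1I1+2I}. The one real obstacle is the even-weight double zeta values appearing in the second sum: they are not reducible to polynomials in Riemann zeta values on their own, and it is precisely the symmetrization through the harmonic product, combined with the $\pi^2$-divisibility of $\zeta^*(kn+2)$, that collapses them into products of single odd zeta values modulo $\pi^2$.
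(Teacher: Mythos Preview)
Your argument is correct and follows essentially the same route as the paper: reduce the left-hand side to $\zeta^*(\sigma_2(I_0(\{a,b\}^n)))$ via Lemma~\ref{lem: I2+1I1+2I=s2(I0(a,b))}, rewrite using Lemmas~\ref{lem: I0(a,b)=(-1)^n(a+b)^n} and~\ref{lem:sigma2 k^n}, discard $\pi^2$-divisible terms, symmetrize the surviving double zeta values via the harmonic product, and extend the summation range using $\zeta^*(1)=0$. The only cosmetic difference is that you kill the first sum in Lemma~\ref{lem:sigma2 k^n} through the factor $\zeta^*(k(i+1)+2)\in\pi^2\mathcal{Z}$ (even argument), whereas the paper phrases it via $\zeta^*(\{a+b\}^m)\equiv 0$ for $m\geq 1$; both achieve the same end.
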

\begin{proof}
    By Lemma \ref{lem: I2+1I1+2I=s2(I0(a,b))}, we see that
    \begin{equation}
        \label{eq: I2(1,3)+1I1(1,3)+2I(1,3)=s2(I0(1,3))}
        I_2(\{a,b\}^n)
        +
        {}_1I_1(\{a,b\}^n)
        +
        I_2(\{b,a\}^n)
        =
        \sigma_2
        (I_0(\{a,b\}^n)).
    \end{equation}
    By Lemma \ref{lem:sigma2 k^n} and \ref{lem: I0(a,b)=(-1)^n(a+b)^n}, we have
    \begin{multline}
        \sigma_2
        (I_0(\{a,b\}^n))
        \\
        \begin{aligned}[c]
            &=
            (-1)^n
            \sigma_2
            (\{a+b\}^n)
            \\
            &=
            (-1)^n
            \frac{(a+b+1)(a+b)}{2}
            \sum_{i=0}^{n-1}
            (-1)^i
            \bigl((a+b)(i+1)+2\bigr)
            *
            \bigl(\{a+b\}^{n-i-1}\bigr)
            \\
            &\qquad
            +(-1)^n
            (a+b)^2
            \sum_{\substack{i,j\geq 0 \\ 0\leq i+j\leq n-2}}
            (-1)^{i+j}
            \bigl((a+b)(i+1)+1,(a+b)(j+1)+1\bigr)*\bigl(\{a+b\}^{n-i-j-2}\bigr).
            \label{eq: s2(I0(1,3))}
        \end{aligned}
    \end{multline}
    The equation above, 
    together with the fact that 
    $\zeta^*(\{a+b\}^n)$ is congruent to zero modulo $\pi^2$
    for every positive integer $n$,
    implies that
    \begin{equation}
        \begin{split}
            \zeta^*
            (
                \sigma_2
                (I_0(\{a,b\}^n))
            )
            &\equiv
            (a+b)^2
            \sum_{\substack{
                n_1,n_2\geq 0
                \\
                n_1+n_2=n-2
            }}
            \zeta^*
            \bigl(
                (a+b)(n_1+1)+1,(a+b)(n_2+1)+1
            \bigr)
            \pmod{\pi^2}
            \\
        \end{split}
    \end{equation}
    Since 
    \begin{multline}
        \zeta^*
        \bigl(
            (a+b)(n_1+1)+1
        \bigr)
        \zeta^*
        \bigl(
            (a+b)(n_2+1)+1
        \bigr)
        \\
        \equiv
        \zeta^*
        \bigl(
            (a+b)(n_1+1)+1
            ,
            (a+b)(n_2+1)+1
        \bigr)
        +
        \zeta^*
        \bigl(
            (a+b)(n_2+1)+1
            ,
            (a+b)(n_1+1)+1
        \bigr)
        \pmod{\pi^2},
    \end{multline}
    we have
    \begin{equation}
        \begin{split}
            \label{eq:zeta^*(sigma_2(I_0(1,3)))}
            \zeta^*
            (
                \sigma_2
                (I_0(\{a,b\}^n))
            )
            &\equiv
            \frac{(a+b)^2}{2}
            \sum_{\substack{
                n_1,n_2\geq 0
                \\
                n_1+n_2=n-2
            }}
            \zeta^*
            \bigl(
                (a+b)(n_1+1)+1
            \bigr)
            \zeta^*
            \bigl(
                (a+b)(n_2+1)+1
            \bigr)
            \pmod{\pi^2}
            \\
            &=
            \frac{(a+b)^2}{2}
            \sum_{\substack{
                n_1,n_2\geq 0
                \\
                n_1+n_2=n
            }}
            \zeta^*
            \bigl(
                (a+b)n_1+1
            \bigr)
            \zeta^*
            \bigl(
                (a+b)n_2+1
            \bigr),
        \end{split}
    \end{equation}
    where we use $\zeta^*(1)=0$ in the last equality.
    Applying $\zeta^*$ on both sides of \eqref{eq: I2(1,3)+1I1(1,3)+2I(1,3)=s2(I0(1,3))},
    we obtain the claim of the lemma, as the right-side is given by 
    \eqref{eq:zeta^*(sigma_2(I_0(1,3)))}.
\end{proof}
The following formulas will be useful for later proofs.
For nonnegative integers $n$, we have
\begin{align}
    \zeta(\{4\}^n)
    &=
    \frac{2^{2n+1}\pi^{4n}}{(4n+2)!}
    \equiv 
    \delta_{n,0}
    \pmod{\pi^2},
    \label{eq: zeta(4^n)}
    \\
    \zeta(\OT n)
    &=
    \frac{1}{4^n}\zeta(\{4\}^n)
    \equiv 
    \delta_{n,0}
    \pmod{\pi^2},
    \label{eq: zeta(OT n)}
    \\
    \zeta^*(\{1,3\}^{n},1)
    &=
    \frac{2}{4^{n}}
    \sum_{j=1}^{n} (-1)^j \zeta(4j + 1) \zeta(\{4\}^{n-j})
    \equiv
    2(-4)^{-n} \zeta(4n+1)\pmod{\pi^2}.
    \label{eq: zeta((1,3)^n,1)}
\end{align}
See \cite[Example 2.2]{BBBL01} for the first and second formulas, 
and \cite[Proposition 4.2]{BY18} for the proof of the third.
\begin{lemma}
    \label{lem: zeta1(3,1)=zeta1(313)}
    For a nonnegative integer $n$, we have 
    \begin{equation}
        \label{eq: zeta1(3,1)=zeta1(313)}
        \zeta_1^*(\TO{n+1})
        \equiv
        \sum_{i=0}^n
        2(-4)^{-i}
        \zeta^*(4i+1)
        \zeta_1^*(\{3,1\}^{n-i},3)
        +
        2((-4)^{-n-1}-2)\zeta(4n+5)
        \pmod{\pi^2}.
    \end{equation}
\end{lemma}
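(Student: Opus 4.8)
The plan is to read the right-hand side as a harmonic (stuffle) convolution. By \eqref{eq: zeta((1,3)^n,1)} we have $2(-4)^{-i}\zeta^*(4i+1)\equiv\zeta^*(\{1,3\}^i,1)\pmod{\pi^2}$ for every $i\ge 0$ (both sides vanish when $i=0$, since $\zeta^*(1)=0$), so the asserted congruence is equivalent modulo $\pi^2$ to
\[
\zeta_1^*(\TO{n+1})\equiv\sum_{i=0}^n\zeta^*(\{1,3\}^i,1)\,\zeta_1^*(\{3,1\}^{n-i},3)+2\bigl((-4)^{-n-1}-2\bigr)\zeta(4n+5)\pmod{\pi^2}.
\]
Since $\zeta^*$ is a homomorphism for $*$ and $\zeta_1^*=\zeta^*\circ\sigma_1$, each summand equals $\zeta^*\bigl((\{1,3\}^i,1)*\sigma_1(\{3,1\}^{n-i},3)\bigr)$. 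The guiding goal is therefore to realize $\sigma_1(\TO{n+1})$, up to terms that are harmless modulo $\pi^2$, as one harmonic expression whose $\zeta^*$-image is exactly this convolution.

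The structural inputs I would use are: (i) $\sigma_1$ is a derivation for $*$, namely the $n=1$ case of Lemma \ref{lem: sigma_n(k*l)=sum(sigma_i(k)*sigma_{n-i}(l))}, which gives $\sigma_1(\boldsymbol{a}*\boldsymbol{b})=\sigma_1(\boldsymbol{a})*\boldsymbol{b}+\boldsymbol{a}*\sigma_1(\boldsymbol{b})$; (ii) $\zeta^*$ is a $*$-homomorphism with $\zeta^*(1)=0$; and (iii) modulo $\pi^2$ one has $\zeta(2)\equiv 0$ together with $\zeta(\{4\}^m)\equiv\delta_{m,0}$ and $\zeta(\OT m)\equiv\delta_{m,0}$ from \eqref{eq: zeta(4^n)} and \eqref{eq: zeta(OT n)}. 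Because $\TO{n+1}=(\{3,1\}^n,3,1)$ ends in $1$, the natural entry point is the regularization forced by $\zeta^*(1)=0$: write the harmonic identity $(\{3,1\}^n,3)*(1)=\TO{n+1}+R$, where $R$ collects all remaining insertions and merges of the single letter $1$. Applying the derivation $\sigma_1$ and then $\zeta^*$, the contribution $\zeta^*(\sigma_1(\{3,1\}^n,3))\,\zeta^*(1)$ is zero and the contribution $\zeta^*(\{3,1\}^n,3)\,\zeta^*(2)$ vanishes modulo $\pi^2$ since $\sigma_1(1)=(2)$ and $\zeta(2)\equiv 0$. This cleanly isolates $\zeta_1^*(\TO{n+1})\equiv-\zeta^*(\sigma_1(R))\pmod{\pi^2}$, converting the problem into the evaluation of $R$.

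From here I would induct on $n$. A suggestive piece of structure is the concatenation decomposition $\TO{n+1}=(\{3,1\}^i,3)\,\|\,(1,\{3,1\}^{n-i})$, valid for each $0\le i\le n$: the left factor is exactly the odd word appearing in $\zeta_1^*(\{3,1\}^{n-i},3)$, while the right factor $(1,\{3,1\}^{n-i})$ is the reversal of $(\{1,3\}^{n-i},1)$, whose regularized value is controlled by \eqref{eq: zeta((1,3)^n,1)}. The aim is to reorganize the merge and insertion terms of $R$ (after $\sigma_1$ and $\zeta^*$), using the inductive hypothesis on shorter even words, into the odd-word quantities $\zeta_1^*(\{3,1\}^{n-i},3)$ with admissible cofactors evaluated by \eqref{eq: zeta((1,3)^n,1)}, \eqref{eq: zeta(4^n)}, \eqref{eq: zeta(OT n)}; this is where the coefficients $\zeta^*(\{1,3\}^i,1)=2(-4)^{-i}\zeta^*(4i+1)$ and the boundary coefficient $2((-4)^{-n-1}-2)$ must emerge. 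The base case $n=0$ is a direct check: $\sigma_1(3,1)=3(4,1)+(3,2)$ gives $\zeta_1^*(3,1)=3\zeta^*(4,1)+\zeta^*(3,2)\equiv 3(-3\zeta(5))+\tfrac{9}{2}\zeta(5)=-\tfrac{9}{2}\zeta(5)$, which matches $2((-4)^{-1}-2)\zeta(5)$.

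I expect the main obstacle to be entirely combinatorial rather than conceptual. The reduction $\zeta_1^*(\TO{n+1})\equiv-\zeta^*(\sigma_1(R))$ is forced, but $R$ proliferates into many insertion and merge words that fall outside the alternating family, and the reversal linking $(1,\{3,1\}^{n-i})$ to $(\{1,3\}^{n-i},1)$ is not a symmetry of $\zeta^*$ and so must be handled within the regularized, mod-$\pi^2$ framework. The real work is to show that, modulo $\pi^2$, all of these contributions telescope into precisely the convolution $\sum_{i}\zeta^*(\{1,3\}^i,1)\,\zeta_1^*(\{3,1\}^{n-i},3)$ together with a single surviving Riemann-value term, and to pin down its rational coefficient $2((-4)^{-n-1}-2)$ by carefully tracking the signs and binomial weights introduced by $\sigma_1$ and by repeated use of $\zeta(\{4\}^m)\equiv\delta_{m,0}$.
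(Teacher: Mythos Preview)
Your proposal is not a proof but a plan, and the hard step is left open: after reducing to $\zeta_1^*(\TO{n+1})\equiv-\zeta^*(\sigma_1(R))\pmod{\pi^2}$, you still have to evaluate $\zeta^*(\sigma_1(R))$, where $R$ is a sum of $4n+2$ insertion and merge terms coming from $(\{3,1\}^n,3)*(1)$. You propose induction but carry out only the base case; the sentence ``the real work is to show that all of these contributions telescope'' is precisely the content of the lemma, and you have not done it. There is no indication of an inductive mechanism that would actually produce the factors $\zeta_1^*(\{3,1\}^{n-i},3)$ from the $\sigma_1$-image of $R$, and the reversal issue you yourself flag (linking $(1,\{3,1\}^{n-i})$ to $(\{1,3\}^{n-i},1)$) is genuine and unaddressed.

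The key idea you are missing, and which makes the paper's proof essentially a two-line computation, is that the entire combination you are trying to assemble is already packaged as $\zeta^*(I_1(\OT{n+1}))$. Unwinding the definition of $I_1={}_0I_1$ on $(\OT{n+1})$ gives, term by term,
\[
\zeta^*(I_1(\OT{n+1}))
=\zeta_1^*(\TO{n+1})
+\sum_{i=1}^{n+1}\Bigl(\zeta(\OT i)\,\zeta_1^*(\TO{n+1-i})
-\zeta^*(\{1,3\}^{i-1},1)\,\zeta_1^*(\{3,1\}^{n+1-i},3)\Bigr),
\]
and with $\zeta(\OT i)\equiv\delta_{i,0}$ and \eqref{eq: zeta((1,3)^n,1)} this collapses mod $\pi^2$ to exactly
\[
\zeta_1^*(\TO{n+1})-\sum_{i=0}^{n}2(-4)^{-i}\zeta^*(4i+1)\,\zeta_1^*(\{3,1\}^{n-i},3).
\]
But $\zeta^*(I_1(\OT{n+1}))$ is the coefficient of $t$ in $\zeta^*_{\mathcal{S}_2}(\OT{n+1})$, which is already computed in \eqref{eq: zetaS2(1,3)}; reducing that formula mod $\pi^2$ gives $2((-4)^{-n-1}-2)\zeta(4n+5)$ directly. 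So rather than expanding a stuffle with $(1)$ and fighting a large combinatorial residue $R$, you should recognize the target as the $t$-coefficient of an $\mathcal{S}_2$-value that has already been evaluated.
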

\begin{proof}
    By definition of $I_1$, \eqref{eq: zeta(OT n)} and \eqref{eq: zeta((1,3)^n,1)},
    we have the following expression of $\zeta^*(I_1(\OT {n+1}))$
    for nonnegative integer $n$.
    \begin{align}
            \zeta^*(I_1(\OT {n+1}))
            &=
            \zeta_1^*(\TO{n+1})
            +\sum_{i=1}^{n+1} 
            \biggl(
                \zeta(\OT i)\zeta_1^*(\TO{n+1-i})
                -
                \zeta^*(\{1,3\}^{i-1},1)\zeta^*_1(\{3,1\}^{n+1-i},3)
            \biggr)
            \\
            &\equiv
            \zeta_1^*(\TO {n+1})
            -
            \sum_{i=1}^{n+1}
            2(-4)^{-i+1} \zeta^*(4(i-1)+1)\zeta_1^*(\{3,1\}^{n+1-i},3)
            \pmod{\pi^2}
            \\
            &=
            \zeta_1^*(\TO {n+1})
            -
            \sum_{i=0}^{n}
            2(-4)^{-i} \zeta^*(4i+1)\zeta_1^*(\{3,1\}^{n-i},3).\label{eq:2}
    \end{align}
    Moreover, By \eqref{eq: zetaS2(1,3)} and \eqref{eq: zetaS3=z(I0)+z(I1)t+z(I2)t^2}, we see that
    \begin{multline}
        \label{eq:1}
        \zeta^*(I_1(\OT {n+1}))
        \\
        \begin{aligned}[c]
            &=
            \sum_{\substack{
                n_0, n_1 \geq 0 
                \\
                n_0 + n_1 = n+1
            }} 
            \frac{(-4)^{n_0+1} (2 - (-4)^{-n_1})}{(4n_0 + 2)!} 
            \pi^{4n_0} \zeta^*(4n_1 + 1) 
            - (-1)^{n+1} 
            \sum_{\substack{n_0, n_1 \geq 0 \\ n_0 + n_1 = 2n+2 \\ n_0, n_1: \text{odd}}} 
            \frac{2^{n_0 - n_1 + 2}}{(2n_0 + 2)!} 
            \pi^{2n_0} \zeta(2n_1 + 1)
            \\
            &\equiv
            2((-4)^{-n-1}-2)\zeta(4n+5)
            \pmod{\pi^2}.
        \end{aligned}
    \end{multline}
    By comparing \eqref{eq:2} and \eqref{eq:1}, we obtain the assertion.
\end{proof}
\begin{proposition}
    \label{prop: zeta(1I1(1,3))}
    For a nonnegative integer $n$, we have 
    \begin{equation}
        \label{eq: zeta(1I1(1,3))}
        \zeta^*({}_1I_1(\{1,3\}^n))
        \equiv 
        -4
        \sum_{\substack{
            n_1,n_2\geq 0
            \\
            n_1+n_2=n
        }}
        \Bigl(
            (-4)^{-n}
            -
            (-4)^{-n_1}
            -
            (-4)^{-n_2}
        \Bigr)
        \zeta^*(4n_1+1)
        \zeta^*(4n_2+1)
        \pmod{\pi^2}.
    \end{equation}
\end{proposition}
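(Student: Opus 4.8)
The plan is to exploit that $\zeta^*$ is multiplicative for the stuffle product, i.e.\ $\zeta^*(\boldsymbol{u}*\boldsymbol{v})=\zeta^*(\boldsymbol{u})\zeta^*(\boldsymbol{v})$, together with $\zeta^*\circ\sigma_1=\zeta_1^*$. Applying $\zeta^*$ directly to the definition of ${}_1I_1$ then turns every internal $*$ into an ordinary product. Writing $(\{1,3\}^n)=(k_1,\dots,k_{2n})$ and noting that each prefix $(k_1,\dots,k_i)$ and each reversed suffix $(k_{2n},\dots,k_{i+1})$ is again an alternating string, I would first record the expansion
\[
\begin{aligned}
\zeta^*({}_1I_1(\{1,3\}^n))
={}&\sum_{j=0}^{n}\zeta_1^*(\{1,3\}^j)\,\zeta_1^*(\{3,1\}^{n-j})\\
&-\sum_{j=0}^{n-1}\zeta_1^*(\{1,3\}^j,1)\,\zeta_1^*(\{3,1\}^{n-j-1},3),
\end{aligned}
\]
where the minus sign on the odd cuts is the factor $(-1)^{k_{i+1}+\cdots+k_{2n}}$: a cut after an entry equal to $1$ discards a suffix of odd weight, whereas a cut after an entry equal to $3$ discards one of even weight.

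The heart of the matter is then two evaluations modulo $\pi^2$ of the ``appended'' alternating families,
\[
\zeta_1^*(\{1,3\}^n)\equiv -2(-4)^{-n}\zeta^*(4n+1),
\qquad
\zeta_1^*(\{3,1\}^{n},3)\equiv 0
\pmod{\pi^2}.
\]
The vanishing on the right kills the entire second sum above, and through Lemma~\ref{lem: zeta1(3,1)=zeta1(313)} it also turns $\zeta_1^*(\{3,1\}^{n-j})$ in the first sum into $2\bigl((-4)^{-(n-j)}-2\bigr)\zeta^*(4(n-j)+1)$, i.e.\ into the coefficient sequence of $\zeta^*(I_1(\{1,3\}^{n-j}))$ read off from \eqref{eq: zetaS2(1,3)}. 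Substituting both evaluations into the surviving first sum yields
\[
\begin{aligned}
\zeta^*({}_1I_1(\{1,3\}^n))
&\equiv -4\sum_{j=0}^{n}(-4)^{-j}\bigl((-4)^{-(n-j)}-2\bigr)\zeta^*(4j+1)\,\zeta^*(4(n-j)+1)\\
&=-4\sum_{j=0}^{n}\bigl((-4)^{-n}-2(-4)^{-j}\bigr)\zeta^*(4j+1)\,\zeta^*(4(n-j)+1).
\end{aligned}
\]
Symmetrizing the term $-2(-4)^{-j}$ under $j\mapsto n-j$ replaces the bracket by $(-4)^{-n}-(-4)^{-j}-(-4)^{-(n-j)}$, which is exactly the asserted right-hand side; the boundary terms $j=0$ and $j=n$ drop out automatically because $\zeta^*(1)=0$.

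The main obstacle is this pair of congruences, and above all the vanishing $\zeta_1^*(\{3,1\}^n,3)\equiv 0\pmod{\pi^2}$. A priori $\zeta_1^*(\{3,1\}^n,3)$ is a weight-$(4n+4)$ combination of multiple zeta values that may contain irreducible double zetas (such as $\zeta(3,5)$ at $n=1$) as well as a $\zeta^*(\mathrm{odd})\zeta^*(\mathrm{odd})$ part, and one must show that all of these cancel, leaving only a rational multiple of $\pi^{4n+4}$. I would establish both congruences from the shifted-value generating function $\sum_{m\ge 0}\zeta^*_m(\boldsymbol{k})t^m$, which for admissible $\boldsymbol{k}$ equals $\sum_{0<n_1<\cdots<n_r}\prod_i(n_i-t)^{-k_i}$, evaluating it for the (admissible) families $(\{1,3\}^n)$ and $(\{3,1\}^n,3)$ via the Borwein--Bradley--Broadhurst--Lison\v{e}k product formulas \cite{BBBL01} and extracting the coefficient of $t^1$ modulo $\pi^2$. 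This is where the only genuinely analytic input of the argument is concentrated; by contrast the homomorphism expansion, the use of Lemma~\ref{lem: zeta1(3,1)=zeta1(313)}, and the closing convolution and symmetrization are entirely formal.
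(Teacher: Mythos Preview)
Your expansion of $\zeta^*({}_1I_1(\{1,3\}^n))$ into the two sums over cuts, and the use of $\zeta_1^*(\{1,3\}^j)\equiv-2(-4)^{-j}\zeta^*(4j+1)$, are exactly what the paper does as well. The divergence is in how you handle the two remaining unknowns $\zeta_1^*(\{3,1\}^m)$ and $\zeta_1^*(\{3,1\}^m,3)$.

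You attack them separately: first assert $\zeta_1^*(\{3,1\}^m,3)\equiv 0\pmod{\pi^2}$, then feed this into Lemma~\ref{lem: zeta1(3,1)=zeta1(313)} to obtain $\zeta_1^*(\{3,1\}^m)$. As you yourself flag, the vanishing is the crux, and your plan to extract it from a BBBL-type closed form for the shifted series $\sum_{0<n_1<\cdots<n_{2m+1}}\prod_i(n_i-t)^{-k_i}$ with $\boldsymbol{k}=(\{3,1\}^m,3)$ is not substantiated: no such product formula is cited in \cite{BBBL01,BB03,BY18,BC20} for this family, and the argument that produces $\zeta_1^*(\{1,3\}^n)$ in the paper goes through the identity $\zeta_1^*(\{1,3\}^n)=-\zeta^*(\{1,3\}^n,1)$ together with \cite[Proposition~4.2]{BY18}, not through a shifted closed form. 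So as written there is a genuine gap at exactly the point you identify as the main obstacle.

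The paper sidesteps this obstacle completely. Instead of evaluating $\zeta_1^*(\{3,1\}^m,3)$, it evaluates the \emph{other} odd-cut factor $\zeta_1^*(\{1,3\}^j,1)$, which is available from \cite[Lemma~2.19]{HMS24} and gives $\sum_{j\ge 0}\zeta_1^*(\{1,3\}^j,1)u^{4j+2}\equiv -4P^2$. With $\sum_{j\ge 0}\zeta_1^*(\{1,3\}^j)u^{4j+1}\equiv -2P$ one can then factor $-2P$ out of the whole generating function, and what remains is precisely the combination
\[
\zeta_1^*(\{3,1\}^{n+1})-\sum_{i=0}^{n}2(-4)^{-i}\zeta^*(4i+1)\,\zeta_1^*(\{3,1\}^{n-i},3)
\]
appearing on the left of Lemma~\ref{lem: zeta1(3,1)=zeta1(313)}. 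That lemma evaluates this combination as $2((-4)^{-n-1}-2)\zeta(4n+5)$ directly, so neither $\zeta_1^*(\{3,1\}^m)$ nor $\zeta_1^*(\{3,1\}^m,3)$ ever has to be known individually. In effect, the paper uses Lemma~\ref{lem: zeta1(3,1)=zeta1(313)} once, on the specific linear combination that naturally appears, whereas your route uses it only after inserting an extra hypothesis. Replacing your unproven vanishing by the established evaluation of $\zeta_1^*(\{1,3\}^j,1)$ closes the gap and yields the same final computation $-4P(P-2Q)$.
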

\begin{proof}
    To prove \eqref{eq: zeta(1I1(1,3))}, we show that the generating functions on both sides coincide.
    Let
    \begin{align}
        P&=
        \sum_{n\geq 0}
        (-4)^{-n}\zeta^*(4n+1)u^{4n+1}
        \in \mathbb{R}\llbracket u\rrbracket,
        \\
        Q&=
        \sum_{n\geq 0}
        \zeta^*(4n+1)u^{4n+1}
        \in \mathbb{R}\llbracket u\rrbracket.
    \end{align}
    From these generating functions, 
    it is immediately clear that the right-hand side of \eqref{eq: zeta(1I1(1,3))} can be written as follows:
    \begin{equation}
        \label{eq: RHS zeta(1I1(1,3)) generating function}
        \sum_{n\geq 0}
        \biggl(
        -4
        \sum_{\substack{
            n_1,n_2\geq 0
            \\
            n_1+n_2=n
        }}
        \Bigl(
            (-4)^{-n}
            -
            (-4)^{-n_1}
            -
            (-4)^{-n_2}
        \Bigr)
        \zeta^*(4n_1+1)
        \zeta^*(4n_2+1)
        \biggr)
        u^{4n+2}
        = -4P(P-2Q).
    \end{equation}
    Next, we consider the left-hand side of \eqref{eq: zeta(1I1(1,3))}.
    By the definition of ${}_1I_1$, 
    it follows that the generating function of 
    the right-hand side of \eqref{eq: zeta(1I1(1,3))}
    can be expressed as the difference of products of the generating functions.
    \begin{multline}
        \label{eq: zeta(1I1(1,3)) generating function}
        \sum_{n\geq 0}
        \zeta^*({}_1I_1(\OT n))
        u^{4n+2}
        =
        \Biggl(
            \sum_{n_1\geq 0}
            \zeta_1^*(\TO{n_1})
            u^{4n_1+1}
        \Biggr)
        \Biggl(
            \sum_{n_2\geq 0}
            \zeta_1^*(\OT{n_2})
            u^{4n_2+1}
        \Biggr)
        \\-
        \biggl(
            \sum_{n_3\geq 0}
            \zeta_1^*(\OT {n_3}, 1)
            u^{4n_3+2}
        \biggr)
        \biggl(
            \sum_{n_4\geq 0}
            \zeta_1^*(\TO {n_4}, 3)
            u^{4n_4+4}
        \biggr).
    \end{multline}
    According to \cite[proof of Lemma 2.15]{HMS24}, 
    it is known that $\zeta^*_1(\{1,3\}^{n})=-\zeta^*(\{1,3\}^n,1)$, 
    and from \eqref{eq: zeta((1,3)^n,1)}, we obtain
    \begin{equation}
        \label{eq: zeta_1(1,3) mod pi^2}
        \zeta_1^*(\{1,3\}^{n})
        \equiv 
        -2 
        (-4)^{-n}
        \zeta^*(4n+1)
        \pmod{\pi^2}
    \end{equation}
    for any nonnegative integer $n$.
    Hence, we have 
    \begin{equation}
        \label{eq: GF zeta_1(1,3) mod pi^2}
        \sum_{n_2\geq 0}
        \zeta_1^*(\OT{n_2})
        u^{4n_2+1}
        \equiv
        -2P
        \pmod{\pi^2}.
    \end{equation}
    Furthermore, in \cite[Lemma 2.19]{HMS24}, it is shown that
    \begin{multline}
        \zeta_1^*(\{1,3\}^{n}, 1)
            \\
            =
            (-4)^{-n}
            \sum_{i=0}^{n}
            (-1)^{n-i}
            (4i+1)
            \zeta(4i+2)
            \zeta(\{4\}^{n-i})
            +(-4)^{1-n}
            \sum_{\substack{
                n_1,n_2,n_3\geq 0
                \\
                n_1+n_2+n_3=n
            }}
            (-1)^{n_3}
            \zeta^*(4n_1+1)
            \zeta^*(4n_2+1)
            \zeta(\{4\}^{n_3}).
    \end{multline}
    By reducing modulo $\pi^2$, it takes the following form.
    \begin{equation}
        \label{eq: zeta_1(1,3,1) mod pi^2}
        \zeta_1^*(\OT n ,1)
        \equiv
        -4
        \sum_{\substack{
            n_1,n_2\geq 0
            \\
            n_1+n_2=n
        }}
        (-4)^{-n}
        \zeta^*(4n_1+1)
        \zeta^*(4n_2+1)
        \pmod{\pi^2}.
    \end{equation}
    Thus, we have 
    \begin{equation}
        \label{eq: GF zeta_1(1,3,1) mod pi^2}
        \sum_{n_3\geq 0}
        \zeta_1^*(\OT {n_3}, 1)
        u^{4n_3+2}
        \equiv
        -4P^2 
        \pmod{\pi^2}.
    \end{equation}
    By \eqref{eq: GF zeta_1(1,3) mod pi^2} and \eqref{eq: GF zeta_1(1,3,1) mod pi^2},
    we see that \eqref{eq: zeta(1I1(1,3)) generating function} can be expressed as in the following. 
    \begin{multline}
        \sum_{n\geq 0}
        \zeta^*({}_1I_1(\OT n))
        u^{4n+2}
        \\
        \equiv 
        -2P
        \sum_{n\geq 0}
        \Biggl(
            \zeta_1^*(\TO{n+1})
        -
            \sum_{i=0}^{n}
            2(-4)^{-i}
            \zeta^*(4i+1)
            \zeta_1^*(\TO{n-i}, 3)
        \biggr)
        u^{4n+5}
        \pmod{\pi^2}.
    \end{multline}
    By Lemma \ref{lem: zeta1(3,1)=zeta1(313)}, we obtain
    \begin{equation}
        \begin{split}
            \label{eq: LHS zeta(1I1(1,3)) generating function}
            \sum_{n\geq 0}
            \zeta^*({}_1I_1(\OT n))
            u^{4n+2}
            &\equiv
            -4P
            \biggl(
                \sum_{n\geq 0} ((-4)^{-n-1}-2)\zeta(4n+5) u^{4n+5}
            \biggr)
            \pmod{\pi^2}
            \\
            &=
            -4P(P-2Q).
        \end{split}
    \end{equation}
    Thus \eqref{eq: RHS zeta(1I1(1,3)) generating function} and \eqref{eq: LHS zeta(1I1(1,3)) generating function} 
    together give the assertion.
\end{proof}
Note that \eqref{eq: zeta(1I1(1,3))} gives the coefficient of the $-st$ term in the 
expansion of $(s,t)$-adic SMZVs with $\boldsymbol{k}=(\{1,3\}^n)$.
Here, $(s,t)$-adic SMZVs are defined by 
\begin{equation}
    \zeta_{\HS}^{s,t}(\boldsymbol{k})
    =
    \zeta_{\HS}^{s,t}(k_1,\ldots,k_r)
    :=
    \sum_{i=0}^r 
    (-1)^{k_r+\cdots+k_{i+1}}
    \sum_{m,n=0}^{\infty}
    \zeta^*_m(k_1,\ldots,k_i)
    \zeta^*_n(k_{r},\ldots,k_{i+1})
    (-s)^m
    t^n
    \bmod{\pi^2},
\end{equation} 
which was introduced in \cite{HK23}.
Moreover, it also gives the coefficient of the $t_1 t_2$ term (modulo $\pi^2$)
in unified multiple zeta functions with $\boldsymbol{k}=(\{1,3\}^n)$.
Unified multiple zeta functions are explicitly given by the following formula.
\begin{equation}
    \zeta_{\HU}(\boldsymbol{s};t_1;t_2)
    =
    \zeta_{\HU}(s_1,\ldots,s_r;t_1;t_2)
    :=
    \sum_{i=0}^r 
    (-1)^{s_r+\cdots+s_{i+1}}
    \zeta(s_1,\ldots,s_i;t_1)
    \zeta(s_{r},\ldots,s_{i+1};t_2).
\end{equation}
For the detailed definition and the meaning of the notation in the equation above, see \cite{Komori21}.
\section{Proof of the main theorem}
Now we are ready to prove the main theorem.
\begin{proof}[Proof of the main theorem]
    Put $\boldsymbol{k}=(\{1,3\}^n)$ for a nonnegative integer $n$.
    Then \eqref{eq: zetaS3=z(I0)+z(I1)t+z(I2)t^2} is rewritten as
    \begin{equation}
        \label{eq:k=(1,3)ver  zetaS3=z(I0)+z(I1)t+z(I2)t^2}
        \zeta_{\mathcal{S}_3}^*(\{1,3\}^n)
        =
        \zeta^*(I_0(\{1,3\}^n))
        +\zeta^*(I_1(\{1,3\}^n))t
        +\zeta^*(I_2(\{1,3\}^n))t^2.
    \end{equation}
    By \eqref{eq: zetaS2(1,3)}, 
    we see that
    \begin{align}
        \zeta^*(I_0(\{1,3\}^n))
        &=
        \frac{2(-4)^n}{(4n+2)!} \pi^{4n}
        \equiv 
        \delta_{n,0}
        \pmod{\pi^2},
        \label{eq: zeta^*(I_0(1,3))}
        \\
        \zeta^*(I_1(\{1,3\}^n))
        &=
        \sum_{\substack{n_0, n_1 \geq 0 \\ n_0 + n_1 = n}}
        \frac{(-4)^{n_0+1} (2 - (-4)^{-n_1})}{(4n_0+2)!} 
        \pi^{4n_0} \zeta^*(4n_1 + 1)
        \\
        &\qquad
        - (-1)^n \sum_{\substack{n_0, n_1 \geq 0 \\ n_0 + n_1 = 2n \\ n_0, n_1: \text{odd}}}
        \frac{2^{n_0-n_1+2}}{(2n_0+2)!} 
        \pi^{2n_0} \zeta(2n_1+1)
        \\
        &\equiv
        2((-4)^{-n}-4)\zeta^*(4n+1)
        \pmod{\pi^2}
        \label{eq: zeta^*(I_1(1,3))}
    \end{align}
    for every nonnegative integer $n$.
    Therefore, to prove the main theorem \eqref{eq:main}, it is sufficient to calculate $\zeta^*(I_2(\{1,3\}^n))$.
    Since it is obviously $\zeta^*(I_2(\{1,3\}^0))=0$, we assume that $n\geq 1$.
    The coefficient of $t^2$ in \eqref{eq: zetaS3(3,1) mod pi^2} is given by
    \begin{equation}
        \begin{split}
            \label{eq: zeta(I_2(3,1))}
            \zeta^*(I_2(\{3,1\}^n))
            &\equiv
            2(-4)^{-n} 
            \sum_{\substack{n_1, n_2 \geq 0 \\ n_1 + n_2 = 2n}} 
            \zeta^*(2n_1 + 1) 
            \zeta^*(2n_2 + 1)
            \pmod{\pi^2}
            \\
            &=
            2(-4)^{-n}
            \sum_{\substack{
             n_1,n_2\geq 0
             \\
             n_1+n_2=n
            }}
            \zeta^*(4n_1+1)
            \zeta^*(4n_2+1)
            +
            2(-4)^{-n}
            \sum_{\substack{
                n_1,n_2\geq 0
                \\
                n_1+n_2=n-1
            }}
            \zeta(4n_1+3)
            \zeta(4n_2+3).
        \end{split}
    \end{equation}
    By Lemma \ref{lem: zeta(I2+1I1+2I)} with $a=1,b=3$, we obtain
    \begin{equation}
        \label{eq: 1,3}
        \zeta^*
        (I_2(\{1,3\}^n))
        +
        \zeta^*
        ({}_1I_1(\{1,3\}^n))
        +
        \zeta^*
        (I_2(\{3,1\}^n))
        \equiv
        8 
        \sum_{\substack{
            n_1,n_2\geq 0 
            \\
            n_1+n_2=n
        }}
        \zeta^*(4n_1+1)
        \zeta^*(4n_2+1)
        \pmod{\pi^2}.
    \end{equation}
    By using \eqref{eq: 1,3}, Proposition \ref{prop: zeta(1I1(1,3))} and \eqref{eq: zeta(I_2(3,1))},
    we have 
    \begin{multline}
        \label{eq: last}
        \zeta^*(I_2(\{1,3\}^n))
        \\
        \begin{aligned}[c]
            &\equiv
            8
            \sum_{\substack{
                n_1,n_2\geq 0
                \\
                n_1+n_2=n
            }}
            \zeta^*(4n_1+1)\zeta^*(4n_2+1)
            +4
            \sum_{\substack{
                n_1,n_2\geq 0
                \\
                n_1+n_2=n
            }}
            \Bigl(
                (-4)^{-n}
                -
                (-4)^{-n_1}
                -
                (-4)^{-n_2}
            \Bigr)
            \zeta^*(4n_1+1)
            \zeta^*(4n_2+1)
            \\
            &\qquad
            -
            2(-4)^{-n}
            \sum_{\substack{
             n_1,n_2\geq 0
             \\
             n_1+n_2=n
            }}
            \zeta^*(4n_1+1)
            \zeta^*(4n_2+1)
            -
            2(-4)^{-n}
            \sum_{\substack{
                n_1,n_2\geq 0
                \\
                n_1+n_2=n-1
            }}
            \zeta(4n_1+3)
            \zeta(4n_2+3)
            \pmod{\pi^2}
            \\
            &=
            2
            \sum_{\substack{
                n_1,n_2\geq 0
                \\
                n_1+n_2=n
            }}
            \bigl(
                (-4)^{-n_1}-2
            \bigr)
            \bigl(
                (-4)^{-n_2}-2
            \bigr)
            \zeta^*(4n_1+1)
            \zeta^*(4n_2+1)
            \\
            &\qquad
            -
            2(-4)^{-n}
            \sum_{\substack{
                n_1,n_2\geq 0
                \\
                n_1+n_2=n-1
            }}
            \zeta(4n_1+3)
            \zeta(4n_2+3).
        \end{aligned}
    \end{multline}
    We remark that \eqref{eq: last} holds when $n=0$.
    Hence, by \eqref{eq:k=(1,3)ver zetaS3=z(I0)+z(I1)t+z(I2)t^2} with 
    \eqref{eq: zeta^*(I_0(1,3))}, \eqref{eq: zeta^*(I_1(1,3))} and \eqref{eq: last},
    we finally get the desired result.
    This completes the proof.
\end{proof}


\end{document}